\title{Finiteness spaces and generalized power series}
\author{\begin{tabular}[t]{c}
        Richard Blute\thanks{Research supported in part by
NSERC Discovery Grants.} \,\,\,\,\,\,\,\,\,\,\,\,\,\,Pierre-Alain Jacqmin$^*$\,\,\,\,\,\,\,\,\,\,\,\,\,\,Philip Scott$^*$\\
        {\small Department of Mathematics and Statistics}\\ [-4pt]
        {\small University of Ottawa}\\  [-4pt]
        {\small Ottawa, Ontario, Canada}\\ [-4pt]
        \\
Robin Cockett$^*$\\
        {\small Department of Computer Science}\\ [-4pt]
        {\small University of Calgary}\\  [-4pt]
        {\small Calgary, Alberta, Canada}\\ [-4pt]
        \end{tabular}}
\newtheorem{theorem}{Theorem}
\newtheorem{thm}{Theorem}[section]
\newtheorem{lemma}[thm]{Lemma}
\newtheorem{proposition}[thm]{Proposition}
\newtheorem{definition}[thm]{Definition}
\newtheorem{example}[thm]{Example}
\newcommand{\ox}{\otimes}
\newcommand{\rarr}{\rightarrow}
\newcommand{\cC}{\mbox{${\cal C}$}}
\renewcommand{\implies}{\Rightarrow}
\newcommand{\cP}{\mbox{${\cal P}$}}
\newcommand{\cU}{\mbox{${\cal U}$}}
\newcommand{\cV}{\mbox{${\cal V}$}}
\newcommand{\cW}{\mbox{${\cal W}$}}
\newcommand{\cT}{\mbox{${\cal T}$}}
\newcommand{\finrel}{\mbox{${\sf FinRel}$}}
\newcommand{\finpf}{\mbox{${\sf FinPf}$}}
\newcommand{\finf}{\mbox{${\sf FinF}$}}
\newcommand{\seq}{\subseteq}
\newcommand{\cF}{\mbox{${\cal F}$}}
\newcommand{\cd}{\xymatrix}
\newcommand{\ZZ}{\mathbb{Z}}
\newcommand{\M}{\mathbb{M}}
\newcommand{\NN}{\mathbb{N}}
\newcommand{\QQ}{\mathbb{Q}}
\newcommand{\X}{\mathbb{X}}
\newcommand{\Y}{\mathbb{Y}}
\begin{document}

\maketitle

\begin{abstract}
We consider Ribenboim's construction of rings of {\it generalized power series}. Ribenboim's construction makes use of 
a special class of partially ordered monoids and a special class of their subsets. While the restrictions he imposes
might seem conceptually unclear, we demonstrate that they are precisely the appropriate conditions to represent such 
monoids as internal monoids in an appropriate category of Ehrhard's {\it finiteness spaces}. Ehrhard introduced  
finiteness spaces as the objects of a categorical model of classical linear logic, where a set is equipped with a class 
of subsets to be thought of as finitary. Morphisms are relations preserving the finitary structure.
The notion of finitary subset allows for a sharper analysis of computational 
structure than is available in the relational model. For example, fixed point operators fail to be finitary. 

In the present work, we take morphisms to be partial functions preserving the finitary structure rather than relations. 
The resulting category is symmetric monoidal closed, complete and cocomplete.
Any pair of an internal monoid in this category and a ring induces a ring of generalized power series 
by an extension of the Ribenboim
construction based on Ehrhard's notion of {\it linearization} of a finiteness space. We thus further generalize Ribenboim's 
constructions. We give several examples of rings which arise from this construction, including the ring of {\it Puiseux series} and the ring of 
{\it formal power series generated by a free monoid.}

\end{abstract}


\section{Introduction}

Rings of power series are objects of fundamental importance in any number of settings in mathematics and 
theoretical computer science. The applications to algebra and analysis are numerous and well-known. In theoretical computer 
science, power series arise for example in the coinductive analysis of streams~\cite{Rut}, as well as in the study of automata and formal language 
theory~\cite{Droste,Eilenberg}.  Thus any framework which generalizes 
and provides a conceptual basis for understanding such rings is of great interest. 

Ribenboim introduced his notion of {\it generalized power series}~\cite{Rib1,Rib2,Rib3} in order to study rings of arithmetic functions.
But the construction is quite general and gives a great many examples, some of which are discussed below. The construction is 
functorial in nature and thus can be analyzed via category theory. Ribenboim begins with a special class of {\it partially ordered monoids}
(pomonoids), which he calls {\em strict} pomonoids. He considers those functions from the pomonoid to a ring such that the 
support (the inverse image of the complement of~0) is artinian and narrow (defined below). He demonstrates that the Dirichlet convolution 
formula lifts to this setting and thus one obtains a ring which can sensibly be thought of as a ring of power series. 

{\it Finiteness spaces} were introduced by Ehrhard~\cite{Ehr2} as an enrichment of the usual relational model of linear logic~\cite{girard}. A finiteness
space is a set equipped with a class of subsets, which are to be thought of as finitary. A morphism between finiteness spaces is a relation
preserving the finitary structure. Ehrhard's model provides for a much finer analysis of the computational structure of linear logic. Fixed point operators
in particular fail to be finitary, as one would expect. While Ehrhard was interested in constructing a model of linear logic and hence
chose relations as his morphisms, in our study of monoids it seems more appropriate to consider (partial) functions preserving
the finitary structure instead. We call such (partial) functions {\it finitary} (partial) functions. It turns out that the category with functions is symmetric monoidal but not closed, complete or cocomplete, while the category with partial functions is complete, cocomplete and symmetric monoidal closed. 

While the conditions that Ribenboim requires in his construction (the assumption that supports must be artinian and narrow and that the pomonoid
must be strict) seem conceptually unclear, they are precisely the assumptions one needs to view these objects as finiteness spaces. In
particular, we show that for any poset, if one defines the finitary subsets to be the artinian and narrow subsets, then the result
is a finiteness space. If one considers the category {\sf StrPos} of posets and strict homomorphisms (i.e., those morphisms that preserve
strict inequality), then this category is (symmetric) monoidal and the internal monoids are precisely the strict pomonoids of Ribenboim. 
Furthermore if one again defines the finitary subsets to be the artinian and narrow subsets, then one obtains an internal monoid  
in the appropriate category of finiteness spaces. We do so by showing that the constructions described above are functorial and monoidal, thus take 
monoids to monoids. 

The final piece of the puzzle is Ehrhard's {\it linearization} of a finiteness space. For a chosen ring, one assigns to a finiteness space the set of all
functions from the space to the ring whose support is finitary. We show that the linearization of an internal monoid is 
a ring and in particular the linearization of the finiteness space associated to a strict pomonoid is precisely Ribenboim's construction. Ehrhard's linearization of finiteness spaces provided one of the first examples of {\it differential categories}~\cite{ER1,diffcat} and in future work, we intend to study these rings from that 
perspective.

Terminology: our rings are supposed to be unitary, but not necessarily commutative.

\section{Ribenboim's generalized power series}

We now review the structure that Ribenboim called {\it generalized power series}, which we will call {\it 
Ribenboim power series}.\footnote{Note that Ribenboim assumes commutativity of both the underlying ring and the 
pomonoid. In fact, neither assumption is necessary and we have modified the definitions accordingly.} The presentation is based on 
those in~\cite{Rib1,Rib2,Rib3}.

Let $(M, \cdot, \leq )$ be a partially ordered monoid (or {\it pomonoid}), i.e., a monoid in the category {\sf Pos} of posets 
and order-preserving maps. We say that $M$ is \emph{strictly ordered} (or is a {\it strict pomonoid}) if
$$s < s' \implies s\cdot t < s' \cdot t  \mbox{ and } t\cdot s<t\cdot s' \,\,\,\, \forall s,s',t \in M \ .$$

A poset is \emph{artinian} if all strictly descending chains are finite; that is, if any 
list $(m_1 > m_2 > \cdots )$ is finite. It is \emph{narrow} if all discrete subsets are finite; 
that is, if any subset of elements mutually unrelated by $\leq$ is finite. It is {\it noetherian}
if every strictly ascending chain is finite. We will use the following result. It was crucial in~\cite{Rib1} in 
proving Proposition~\ref{PropX}.

\begin{lemma} \label{fund}
Let $(P, \leq )$ be an artinian and noetherian poset. Then $P$ is narrow if and only if $P$ is finite.
\end{lemma}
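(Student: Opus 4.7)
The plan is to prove the nontrivial direction (narrow $\implies$ finite, under the artinian and noetherian hypotheses) by contrapositive: I will show that any infinite artinian and noetherian poset $P$ admits an infinite antichain, hence fails to be narrow. The reverse implication is immediate, since every subset of a finite set is finite.

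First I would pick an injection $\NN \hookrightarrow P$, giving an infinite sequence $(x_n)_{n \in \NN}$ of pairwise distinct elements of $P$. The main idea is to $3$-color the pairs $\{i,j\}$ with $i < j$ according to the three possibilities for $x_i$ and $x_j$ in the poset $P$: either $x_i < x_j$, or $x_i > x_j$, or $x_i$ and $x_j$ are incomparable. By Ramsey's theorem applied to this finite coloring of $[\NN]^2$, there is an infinite subset $N \subseteq \NN$ such that all pairs from $N$ receive the same color. Reindexing $(x_n)_{n \in N}$ as a subsequence, I obtain one of the following three mutually exclusive cases: an infinite strictly ascending chain, an infinite strictly descending chain, or an infinite antichain (discrete subset).

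Now I would rule out the two chain cases using the hypotheses: the artinian assumption forbids an infinite strictly descending chain, and the noetherian assumption forbids an infinite strictly ascending chain. The only surviving possibility is that the Ramsey-extracted subsequence is an infinite antichain, exhibiting an infinite discrete subset of $P$ and thus contradicting narrowness.

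There is no serious obstacle here; the only delicate point is choosing the right combinatorial mechanism to convert the three structural hypotheses into a single finiteness statement, and Ramsey's theorem for pairs with three colors is tailor-made for this purpose. If one wished to avoid invoking Ramsey, an equivalent approach would be to argue directly that in an artinian poset every infinite sequence has an infinite weakly ascending subsequence (by picking, inductively, a minimal element of the tail of the sequence), then to combine this with the noetherian hypothesis to force an eventually constant subsequence and hence an infinite antichain extracted from the constant value's fiber; however, the Ramsey-based argument above is shorter and more transparent.
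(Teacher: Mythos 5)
Your main argument is correct and follows essentially the same Ramsey-based route as the paper, with the cited black boxes inlined: the paper invokes Hodges' corollary of Ramsey's theorem (an infinite poset contains an infinite chain or an infinite antichain) together with Grillet's result that artinian plus noetherian is equivalent to every chain being finite, whereas you apply Ramsey's theorem directly to the three-coloring of pairs by $x_i < x_j$, $x_i > x_j$, or incomparability, so that the homogeneous set is outright an ascending chain, a descending chain, or an antichain, and the three hypotheses eliminate the three cases respectively. This makes your proof self-contained and avoids Grillet's chain characterization (whose nontrivial direction itself requires choice); note that your initial step of choosing an injection $\NN \hookrightarrow P$ still uses countable choice, consistent with the paper's explicit acknowledgment that the Axiom of Choice is involved. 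One caveat: your parenthetical Ramsey-free alternative is flawed as stated. It is not true that in an artinian poset every infinite sequence has an infinite weakly ascending subsequence --- an infinite antichain is artinian and a sequence enumerating it is a counterexample; picking a minimal element of the tail only guarantees that later terms do not lie strictly below it, not that they lie above it. The correct statement (used later in the paper, via Higman) is that a poset is artinian \emph{and narrow} if and only if every sequence admits an infinite weakly ascending subsequence. Since this aside plays no role in your actual argument, the proof stands as written.
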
 

Since Lemma~\ref{fund} is frequently cited in this field, but a proof is typically not given, we include a proof
as an almost immediate corollary of Ramsey's Theorem for infinite posets. 

\begin{proof}
$(\Leftarrow)$ is obvious.  As for ($\Rightarrow$), suppose $P$ is narrow and infinite (as well as artinian and noetherian).  
By Grillet~\cite{Grillet}, Proposition~B.2.3, the artinian and noetherian conditions are equivalent to saying ``every chain of $P$ is finite" (this 
uses the Axiom of Choice).
By Hodges~\cite{Hodges}, Corollary~11.1.5, as a consequence of Ramsey's Theorem, we obtain: an infinite poset $P$ 
either contains an infinite chain or it contains an infinite antichain (i.e., an infinite discrete subset whose elements are pairwise incomparable). 
Since $P$ is narrow, the latter is impossible.
Hence $P$ must contain an infinite chain, which contradicts Grillet's theorem.  
\end{proof}

\begin{definition} [Ribenboim,\cite{Rib1}] {\em
Let $A$ be an abelian group and $(P, \leq )$ a poset. Recall that the \emph{support} of a function $f \colon P \to A$ is defined by 
$supp(f) = \lbrace p \in P \,|\, f(p) \neq 0 \rbrace$. Define the \emph{space of Ribenboim power series from $P$ 
with coefficients in $A$}, denoted $G(P,A)$, to be the abelian group of functions $f \colon P \to A$ whose support 
is artinian and narrow, with pointwise addition. 
}\end{definition} 

We have now established all of the necessary structure to define Ribenboim's generalized power series. 

\begin{theorem} [Ribenboim,\cite{Rib1}]\label{theorem Ribenboim construction}
If $(M, \cdot, \leq )$ is a strict pomonoid and $R$ a ring, then $G(M,R)$ is also a ring with
\[
(f \cdot g) (m) = \sum_{(m_1,m_2) \in X_m (f,g)} f(m_1) \cdot g(m_2)
\]
where
\[
X_m (f,g) := \lbrace (m_1,m_2) \in M \times M \,|\, m_1 \cdot m_2 = m \text{ and } f(m_1) \neq 0, g(m_2) \neq 0 \rbrace.
\]
The unit is given by the function $e\colon M\rarr R$ where $e(m)=1_R$ if $m=1_M$ and 0 otherwise.
\end{theorem}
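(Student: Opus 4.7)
The plan is to verify four properties in order: that each sum defining $(f\cdot g)(m)$ has only finitely many nonzero terms; that $supp(f\cdot g)$ is again artinian and narrow; that $e$ is a two-sided unit; and that the product is bilinear and associative. The conceptual backbone is to treat ``artinian and narrow'' as equivalent to being a well-partial-order (wpo), and to use two standard closure facts: the componentwise product of two wpos is a wpo, and the image of a wpo under any order-preserving map is a wpo in the codomain. Both follow from the characterisation of wpos as posets in which every infinite sequence admits an infinite weakly ascending subsequence, which is itself a Ramsey argument closely related to Lemma~\ref{fund}. I would record these two closure facts before tackling the product.

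For the finiteness of the convolution sum, set $A=supp(f)$ and $B=supp(g)$ and endow $A\times B$ with the componentwise order; by the first closure fact it is a wpo. The crucial use of strictness is the observation that if $(a,b)\leq(a',b')$ in $A\times B$ with $(a,b)\neq(a',b')$, then at least one coordinate inequality is strict, and the strict pomonoid axiom applied on that coordinate yields $ab<a'b'$. Hence the fibre $X_m(f,g)$ is an antichain in $A\times B$, and narrowness forces it to be finite; the same argument applied to $supp(f)\times supp(g)\times supp(h)$ furnishes the finiteness needed for associativity. For the support of the product, $supp(f\cdot g)\subseteq A\cdot B$ is contained in the image of the order-preserving multiplication $A\times B\to M$, which is a wpo by the second closure fact, and any subset of a wpo is artinian and narrow.

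The remaining axioms are routine: $X_m(e,g)=\{(1_M,m)\}$ if $m\in supp(g)$ and is empty otherwise, giving $e\cdot g=g$, and symmetrically on the right; bilinearity is immediate since each sum is finite and $R$-linear in each argument; and associativity is the usual rearrangement of $\sum_{m_1m_2m_3=m}f(m_1)g(m_2)h(m_3)$, legal because of the triple-antichain argument above. The main obstacle is the first step: it is precisely there that the strict-pomonoid axiom and the artinian-and-narrow hypothesis on supports must interact, through the antichain argument in $A\times B$, to rescue the naive Dirichlet convolution from producing infinite sums. Everything else reduces to bookkeeping on finite sums once this is secured.
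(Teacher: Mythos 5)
Your proposal is correct, and it is essentially Ribenboim's original direct argument rather than the route this paper takes. The paper does not prove Theorem~\ref{theorem Ribenboim construction} from scratch: it cites Ribenboim, isolates the well-definedness of the convolution as Proposition~\ref{PropX}, and proves only the combinatorial kernel, Lemma~\ref{fund}, via Ramsey's theorem. Its own conceptual derivation then comes later and is factored through finiteness spaces: artinian-and-narrow subsets form a finiteness structure (Theorem~\ref{pomfin}), strict poset maps give finitary maps (Proposition~\ref{proposition E functor}), $E$ is strict symmetric monoidal, so a strict pomonoid becomes a finiteness monoid (Theorem~\ref{po-imp-fin}), and the ring structure on $R\langle\M\rangle$ follows from conditions (1) and ($2'$) of Lemma~\ref{lemma condition (2')} for $\mu$. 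It is worth noticing that your two key steps are exactly the same combinatorial facts, just packaged differently: your ``strictness makes fibres of multiplication antichains, hence finite by narrowness'' is precisely what makes $E(f)=f$ satisfy condition ($2'$) for a strict map (if $p<p'$ with $f(p)=f(p')$ then $f(p)<f(p)$, so fibres are antichains), and your wpo-product closure fact, via the infinite-ascending-subsequence characterisation, is precisely the content of the lemma that $E$ is strict monoidal (the paper attributes the characterisation to Higman and derives it from Lemma~\ref{fund}); your image-of-a-wpo closure fact corresponds to condition (1). One small point to make explicit in your antichain step: when, say, $a<a'$ and $b\leq b'$, you need strictness in the first coordinate \emph{and} ordinary monotonicity of the pomonoid multiplication in the second to conclude $ab<a'b\leq a'b'$; as written, ``apply the strict axiom on that coordinate'' slightly elides this, but it is immediate. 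What your direct route buys is a self-contained, elementary proof of exactly this theorem; what the paper's factorisation buys is generality — the same argument, done once at the level of (partial) finiteness monoids, also covers examples such as Puiseux series and truncated polynomial rings that do not arise from strict pomonoids.
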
 

The fact that the multiplication is well-defined follows from:

\begin{proposition} [Ribenboim,\cite{Rib1}]\label{PropX}
The set $X_m (f,g)$ is finite for $f$, $g \in G(M,R)$.
\end{proposition}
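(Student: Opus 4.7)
The plan is to view $X_m(f,g)$ as a subset of the product poset $S \times T$, where $S := supp(f)$ and $T := supp(g)$ are both artinian and narrow, and to argue in two steps: (a) $X_m(f,g)$ is an antichain in the product order on $S \times T$; (b) every antichain in $S \times T$ is finite.

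For step (a), suppose $(m_1, m_2) \leq (m_1', m_2')$ coordinate-wise with both pairs in $X_m(f,g)$. If $m_1 < m_1'$, the strict-ordering axiom and order-preservation give $m = m_1 \cdot m_2 < m_1' \cdot m_2 \leq m_1' \cdot m_2' = m$, which is absurd; hence $m_1 = m_1'$, and the symmetric argument in the second coordinate forces $m_2 = m_2'$.

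For step (b), I would first establish the following Ramsey-theoretic strengthening of Lemma \ref{fund}: every infinite sequence $(p_i)_{i \in \NN}$ in an artinian and narrow poset $P$ has an infinite weakly increasing subsequence. Given such a sequence, 2-colour each pair $\{i,j\}$ with $i < j$ according to whether $p_i \leq p_j$; infinite Ramsey produces an infinite monochromatic index set $I$. The ``good'' case directly supplies the required subsequence; in the ``bad'' case one has $p_i \not\leq p_j$ for all $i<j$ in $I$, so in particular the $p_i$, $i \in I$, are pairwise distinct, and applying the Hodges corollary cited in the proof of Lemma \ref{fund} to this infinite subset of $P$ yields either an infinite antichain (contradicting narrowness) or an infinite chain whose elements, by the bad condition, must strictly decrease with index (contradicting artinian-ness). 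Applying this auxiliary lemma first to the first coordinates of a hypothetical infinite sequence of distinct pairs in $X_m(f,g)$, and then to the second coordinates of the resulting subsequence, produces indices $i<j$ with $(m_1^{(i)}, m_2^{(i)}) \leq (m_1^{(j)}, m_2^{(j)})$ in the product order, contradicting (a). The main technical obstacle is this auxiliary Ramsey lemma; step (a) is an immediate consequence of the strict-ordering axiom, and the auxiliary lemma fits into the same logical economy as the existing proof of Lemma \ref{fund} since it invokes precisely the same Hodges corollary.
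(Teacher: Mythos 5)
Your proposal is correct, and every step checks out: step (a) uses exactly the two one-sided strictness conditions together with monotonicity of the pomonoid multiplication (and correctly so, since the paper drops commutativity), and your auxiliary Ramsey lemma is sound --- in the bad case the Hodges dichotomy forces either an infinite antichain, killed by narrowness, or an infinite chain which, enumerated by increasing index, is strictly descending, killed by artinianness. However, the route differs from the one the paper relies on. The paper gives no direct proof of Proposition~\ref{PropX}; it cites Ribenboim and flags Lemma~\ref{fund} as the crucial ingredient there. Ribenboim's argument works coordinatewise: the set of first components of $X_m(f,g)$ is artinian and narrow (being a subset of $supp(f)$) and, by strictness, also noetherian, hence finite by Lemma~\ref{fund}; then each fibre $\{m_2 \,|\, m_1\cdot m_2=m\}\cap supp(g)$ is an antichain, hence finite by narrowness. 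You instead prove that $X_m(f,g)$ is an antichain of the product poset $supp(f)\times supp(g)$ and that such an antichain must be finite, via double subsequence extraction; in effect you are proving that the product of two artinian narrow posets is again artinian and narrow. Interestingly, your auxiliary lemma (every sequence in an artinian narrow poset has a weakly increasing subsequence) is precisely the Higman characterization the paper later invokes --- and remarks can be proved via Lemma~\ref{fund} --- when showing that $E$ is a strict symmetric monoidal functor, while your step (a) is the specialization to $\mu^{-1}(m)$ of the fact that the strict map $\mu$ satisfies condition~($2^\prime$). So your proof is essentially a hands-on, de-categorified version of the paper's generalized argument, where finiteness of $X_m(f,g)$ falls out of $\mu$ being a morphism of finiteness spaces on $E(M)\ox E(M)=E(M\ox M)$; what it buys over Ribenboim's route is the stronger, reusable product fact (the combinatorial heart of the monoidality of $E$), at the cost of invoking Ramsey twice where Ribenboim needs only Lemma~\ref{fund} as stated.
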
 

\noindent There are many examples. See the Ribenboim papers for further discussion. 

\begin{itemize}
\item Let $M=\mathbb{N}$ with the standard order. The result is the usual ring of power series with coefficients in $R$.
\item Let $M=\mathbb{Z}$ with the standard order. The result is the ring of Laurent series with coefficients in $R$.
\item Let $M=\mathbb{N}$ with the discrete order. The result is the usual ring of polynomials in $R$.
\item Let $M=\mathbb{Z}$ with the discrete order. The result is the usual ring of Laurent polynomials in $R$.
\item Let $M=\mathbb{N} \backslash \lbrace 0 \rbrace$ with the operation of multiplication, equipped with the usual ordering. Then $G(M,R)$ is the ring of arithmetic functions with values in $R$, and multiplication is Dirichlet's convolution.
\item Let $M = \mathbb{N} \backslash \lbrace 0 \rbrace$ with the operation of multiplication as above, but now equipped with the divisibility ordering; that is, $m_1 \leq m_2 \iff m_1 | m_2$. Then $G(M,R)$ is a proper subring of the ring of arithmetic functions with values in $R$.
\end{itemize}

\section{Finiteness spaces}

\subsection{Basic constructions}

We now introduce Ehrhard's notion of {\it finiteness space}~\cite{Ehr2}.

\begin{definition} \label{findef}{\em
\begin{itemize}
\item Let $X$ be a set and let \cU\ be a set of subsets of $X$, i.e., $\cU\seq\cP(X)$. Define $\cU^\perp$ by:
\[\cU^\perp=\{u'\seq X \,\,|\,\, \mbox{the set $u'\cap u$ is finite for all $u\in\cU$}\}\]
It is immediate to check that one has $\cU \seq \cU^{\perp\perp}$ and $\cU^{\perp\perp\perp} = \cU^\perp$.

\item A {\em finiteness space} is a pair $\mathbb{X} = (X , \cU)$ with $X$ a set and $\cU\seq\cP(X)$ such that 
$\cU^{\perp\perp}=\cU$. We will sometimes denote $X$ by $|\X|$ and $\cU$ by $\cF(\X)$.

\item A {\em morphism} of finiteness spaces $R\colon \X\rarr \Y$ is a relation $R\colon |\X|\rarr|\Y|$
such that the following two conditions hold:
\begin{enumerate}
\item[(1)] For all $u\in\cF(\X)$, we have $uR\in\cF(\Y)$, where $uR=\{y \in |\Y| \,|\, \exists x \in u, xRy\}$. 
\item[(2)] For all $v'\in\cF(\Y)^\perp$, we have $Rv'\in\cF(\X)^\perp$, where $Rv'=\{x \in |\X| \,|\, \exists y \in v', xRy\}$.
\end{enumerate}
\end{itemize}
It is straightforward to verify that this is a category. We denote it \finrel.
}\end{definition} 

\begin{lemma}  [Ehrhard,\cite{Ehr2}]\label{lemma condition (2')}
In the definition of morphism of finiteness spaces, condition~(2) can be replaced with:
\[\emph{\mbox{($2^{\prime}$) For all $b\in |\Y|$, we have $R\{b\}\in\cF(\X)^\perp$.}}\]
\end{lemma}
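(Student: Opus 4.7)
The plan is to prove that, in the presence of condition (1), conditions (2) and (2$'$) are equivalent. The forward implication (2) $\Rightarrow$ (2$'$) does not even need (1): for any $b \in |\Y|$, the singleton $\{b\}$ belongs to $\cF(\Y)^\perp$, since $\{b\} \cap u$ has at most one element for every $u \in \cF(\Y)$ and hence is finite. Applying (2) to $v' = \{b\}$ yields $R\{b\} \in \cF(\X)^\perp$ immediately.

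For the converse, I would assume (1) and (2$'$), fix an arbitrary $v' \in \cF(\Y)^\perp$, and verify (2), namely that $Rv' \in \cF(\X)^\perp$. Unfolding the definition, this amounts to showing that $Rv' \cap u$ is finite for every $u \in \cF(\X)$. The strategy is to decompose this intersection as a finite union of finite sets. Fix $u \in \cF(\X)$; condition (1) gives $uR \in \cF(\Y)$, so the assumption $v' \in \cF(\Y)^\perp$ forces $uR \cap v'$ to be a finite set. The key identity is
\[Rv' \cap u \;=\; \bigcup_{b \in uR \cap v'} \bigl(R\{b\} \cap u\bigr),\]
which follows because any $x \in Rv' \cap u$ satisfies $xRb$ for some $b \in v'$, and then $x \in u$ places $b$ in $uR \cap v'$; the reverse inclusion is immediate. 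Each term $R\{b\} \cap u$ is finite by (2$'$) together with $u \in \cF(\X)$, and the indexing set is finite, so the union is finite, as required.

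I do not foresee any real obstacle: the argument is essentially an exchange of a universal quantifier over $v' \in \cF(\Y)^\perp$ for a universal quantifier over singletons $\{b\}$, and the decomposition above makes this explicit. The one conceptual point worth recording is that the converse direction genuinely uses (1)—the finiteness of $uR \cap v'$ is precisely what permits the reduction of (2) to its values on singletons, and without (1) the weaker condition (2$'$) would not suffice.
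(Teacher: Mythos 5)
Your proof is correct: the easy direction via $\{b\}\in\cF(\Y)^\perp$ is right, and for the converse the decomposition $Rv'\cap u=\bigcup_{b\in uR\cap v'}\bigl(R\{b\}\cap u\bigr)$ — with $uR\cap v'$ finite because condition~(1) puts $uR$ in $\cF(\Y)$, and each $R\{b\}\cap u$ finite by ($2^{\prime}$) — is exactly the standard argument. Note that the paper itself states this lemma without proof, citing Ehrhard~\cite{Ehr2}; your argument coincides with the one in that reference, including the correct conceptual observation that condition~(1) is genuinely needed for the converse (without it, ($2^{\prime}$) is strictly weaker than (2), as one sees by taking $R$ the identity relation from $(\NN,\cP(\NN))$ to $(\NN,\{\text{finite subsets}\})$).
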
 

\smallskip

\begin{theorem}  [Ehrhard,\cite{Ehr2}]
\finrel\ is a $*$-autonomous category. The tensor $$\X\ox\Y=(|\X\ox \Y|,\cF(\X\ox \Y))$$ is given 
by setting $|\X\ox \Y|=|\X| \times |\Y|$ and
\begin{align*}
\cF(\X\ox \Y)&=\{u\times v \,|\, u\in \cF(\X),v\in\cF(\Y)\}^{\perp\perp}\\
&=\{w \,|\, \exists u\in\cF(\X), \exists v\in\cF(\Y), w \seq u \times v\}.
\end{align*}
The unit for the tensor is $I=(\{*\},\cP(\{*\}))$ and the duality is given by $(|\X|,\cF(\X))^\perp=(|\X|,\cF(\X)^\perp)$.
\end{theorem}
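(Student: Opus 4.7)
The plan is to verify well-definedness first, then to promote the tensor, unit, and duality to a symmetric monoidal structure with involution, and finally to establish closure (the $*$-autonomy isomorphism) via the internal hom $\X \multimap \Y := (\X \ox \Y^\perp)^\perp$. For well-definedness, the identity $\cU^{\perp\perp\perp} = \cU^\perp$ immediately makes $\X \ox \Y$ and $\X^\perp$ finiteness spaces, and a direct calculation gives $\cP(\{*\})^\perp = \cP(\{*\})$ for the unit. The only substantive claim is the equality of the two descriptions of $\cF(\X \ox \Y)$. Writing $G = \{u \times v \mid u \in \cF(\X), v \in \cF(\Y)\}$ and $D = \{w \mid w \seq u \times v \text{ for some such } u, v\}$, any $\cU^{\perp\perp}$ is downward closed (if $w \in \cU^{\perp\perp}$ and $w' \seq w$, then for every $u' \in \cU^\perp$, $w' \cap u' \seq w \cap u'$ is finite), so $D \seq G^{\perp\perp}$. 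For the reverse inclusion, given $w \in G^{\perp\perp}$ and $u' \in \cF(\X)^\perp$, I would choose $\alpha(x) \in |\Y|$ with $(x, \alpha(x)) \in w$ for each $x \in \pi_1(w)$ and set $w_1 = \{(x, \alpha(x)) \mid x \in \pi_1(w) \cap u'\} \seq w$; then $w_1 \cap (u \times v)$ injects into the finite set $u \cap u'$ for every $u \in \cF(\X), v \in \cF(\Y)$, so $w_1 \in G^\perp$ and $|\pi_1(w) \cap u'| = |w_1| = |w \cap w_1|$ is finite. Symmetrically for $\pi_2(w)$, so $w \seq \pi_1(w) \times \pi_2(w) \in G \seq D$.

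The symmetric monoidal structure extends $\ox$ to morphisms by $R \ox S = \{((x,y),(x',y')) \mid xRx', ySy'\}$; conditions~(1) and~($2'$) on $R \ox S$ are routine from the generating form of $\cF(\X \ox \Y)$, functoriality reduces to standard identities for composable relations, and the associator, unitors, and symmetry are taken to be the usual set-level bijections viewed as functional relations, so the monoidal coherence diagrams collapse to identities of relations. Duality acts contravariantly on morphisms via $R \mapsto R^{\mathrm{op}}$ and is involutive on objects from $\cF(\X)^{\perp\perp} = \cF(\X)$.

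The heart of the proof is the identification $\finrel(\X, \Y) = \cF((\X \ox \Y^\perp)^\perp) = \cF(\X \ox \Y^\perp)^\perp \seq \cP(|\X| \times |\Y|)$; namely, a relation $R$ is a morphism iff $R \cap (u \times v')$ is finite for every $u \in \cF(\X)$ and $v' \in \cF(\Y)^\perp$. For $(\Rightarrow)$: condition~(1) gives $uR \cap v'$ finite, and for each $y$ in this finite set, condition~($2'$) (via Lemma~\ref{lemma condition (2')}) bounds $R\{y\} \cap u$; summing proves $R \cap (u \times v')$ finite. For $(\Leftarrow)$: projecting $R \cap (u \times v')$ onto the second coordinate bounds $uR \cap v'$, recovering~(1); specializing to $v' = \{b\}$ (which is automatically in $\cF(\Y)^\perp$) recovers~($2'$). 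With this identification in hand, $*$-autonomy becomes the chain
\[
\finrel(\X \ox \Y, \mathbb{Z}) = \cF\bigl(((\X \ox \Y) \ox \mathbb{Z}^\perp)^\perp\bigr) \cong \cF\bigl((\X \ox (\Y \ox \mathbb{Z}^\perp))^\perp\bigr) = \finrel(\X, (\Y \ox \mathbb{Z}^\perp)^\perp),
\]
by associativity of $\ox$ and $\mathbb{Z}^{\perp\perp} = \mathbb{Z}$. The main obstacle is shared between the well-definedness step and the internal hom identification: both hinge on the same technique of producing a perp-element from a would-be infinite witness by choosing representatives, which is the technical core of the argument.
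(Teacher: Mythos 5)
The paper states this theorem without proof, citing Ehrhard~\cite{Ehr2}, so there is no in-paper argument to compare against; your proposal is a correct reconstruction along the lines of Ehrhard's original proof. Your two key steps --- the choice-of-representatives argument showing that every $w \in \{u \times v \,|\, u \in \cF(\X), v \in \cF(\Y)\}^{\perp\perp}$ satisfies $w \seq \pi_1(w) \times \pi_2(w)$ with both projections finitary, and the identification $\finrel(\X,\Y) = \cF(\X \ox \Y^\perp)^\perp$ that reduces $*$-autonomy to associativity of $\ox$ together with $\X^{\perp\perp} = \X$ --- are exactly the ones used in the cited source, with only routine verifications (naturality, coherence of the associator and symmetry as finitary bijections) left implicit.
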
 

\subsection{Other choices of morphism}\label{subsection other choices of morphisms}

The choice of morphisms for finiteness spaces was motivated by the desire to have a $*$-autonomous category. 
For examining internal monoids, relations as morphisms seem not to be the right choice. One has two other sensible options which we consider now.

We first define the category \finf. Objects are finiteness spaces and a morphism $f\colon (X,\cU)\rarr (Y,\cV)$ is a function satisfying the same
conditions as in Definition~\ref{findef}. We define \finpf\ in the same way except now morphisms are partial functions satisfying the same
conditions as in Definition~\ref{findef}.

We note that a partial function $f\colon X\to Y$ satisfying (2) of Definition~\ref{findef} automatically satisfies (1). Indeed, 
given $u \in \cU$ and $v' \in \cV^\perp$, if $u \cap f^{-1}(v')$ is finite, then so is $f(u)\cap v'$ in 
view of the surjective restriction of $f$: $(u \cap f^{-1}(v')) \twoheadrightarrow (f(u)\cap v')$. Thus the 
category \finpf\ (respectively \finf) is equivalent to the category having finiteness spaces as objects and partial functions (respectively 
total functions) $f\colon X\to Y$ satisfying $f^{-1}(v)\in \cU$ for each $v \in \cV$ as morphisms $(X,\cU)\to (Y,\cV)$. The equivalence is obtained 
by mapping the finiteness space $(X,\cU)$ to $(X,\cU^\perp)$ and $f\colon (X,\cU)\to (Y,\cV)$ to $f\colon (X,\cU^\perp)\to (Y,\cV^\perp)$. 
This is a `topological' way of viewing these categories, but in order to develop the `classical theory', we are going to work in \finpf\ and \finf.

It is easy to see that \finf\ and \finpf\ are symmetric monoidal categories and the 
inclusions $$\finf\hookrightarrow \finpf \hookrightarrow \finrel$$ are bijective on objects, (strict) symmetric monoidal functors.

The category \finf\ does have one significant problem, it is not monoidal closed. Indeed  the 
functor $$- \ox (\varnothing,\cP(\varnothing)) \colon \finf\rarr\finf$$  does not have a right adjoint (because \finf\ does 
not have a terminal object).

\bigskip

On the other hand, we do have:

\begin{proposition} 
The category \finpf\ is a symmetric monoidal closed category.
\end{proposition}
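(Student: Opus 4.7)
The plan is to construct an internal hom $[\Y, \mathbb{W}]$ for any finiteness spaces $\Y = (Y, \cV)$, $\mathbb{W} = (W, \cW)$, and verify the tensor--hom adjunction directly. For the underlying set, observe that currying a partial function $g\colon |\X| \times Y \rightharpoonup W$ in Set yields a \emph{total} function $x \mapsto g_x$, $g_x(y) := g(x,y)$; to match partial functions $|\X| \rightharpoonup |[\Y, \mathbb{W}]|$, I must exclude the everywhere-undefined partial function from the codomain:
\[|[\Y,\mathbb{W}]| := \{f\colon Y \rightharpoonup W \,|\, f \neq \varnothing,\ f \text{ is a morphism in } \finpf\}.\]
That each non-empty $g_x$ is itself a morphism follows from applying condition~($2'$) of Lemma~\ref{lemma condition (2')} to $u = \{x\}$, using that singletons are always finitary (indeed $\{x\} \cap u'$ is finite for any $u'$).

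For the finitary structure I take
\[\cF([\Y,\mathbb{W}]) := \bigl\{S \seq |[\Y,\mathbb{W}]| \,\bigm|\, \forall v \in \cV,\ \forall w \in W,\ \{(f,y) \in S \times v \mid f(y) = w\} \text{ is finite}\bigr\},\]
which is precisely what is needed so that condition~($2'$) for the evaluation $\mathrm{ev}\colon [\Y,\mathbb{W}] \ox \Y \rarr \mathbb{W}$, $(f,y)\mapsto f(y)$, holds by definition. To verify $\perp\perp$-closure, suppose $S\notin\cF([\Y,\mathbb{W}])$, witnessed by some $v, w$; since each individual $f\in |[\Y,\mathbb{W}]|$ has $f^{-1}(\{w\})\cap v$ finite (because $f^{-1}(\{w\})\in \cV^\perp$), the infinitude must be produced by infinitely many distinct $f\in S$ with $f(y) = w$ for some $y\in v$. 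The collection $T$ of such $f$'s then lies in $\cF([\Y,\mathbb{W}])^\perp$ (by the same reasoning applied to an arbitrary $S'\in \cF([\Y,\mathbb{W}])$) and has infinite intersection with $S$, so $S\notin \cF([\Y,\mathbb{W}])^{\perp\perp}$.

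Currying and uncurrying are now straightforward. Given a morphism $g\colon \X\ox\Y\rarr\mathbb{W}$, set $\hat g(x) := g_x$ when $g_x\neq\varnothing$: to see $\hat g^{-1}(\{f\})\in\cF(\X)^\perp$, pick any $(y_0,w_0)\in\mathrm{graph}(f)$ and observe $\hat g^{-1}(\{f\})\seq\{x \mid g(x,y_0)=w_0\}$, whose intersection with any $u\in \cF(\X)$ is finite by~($2'$) applied to $u\times\{y_0\}$; and $\{(f,y)\in \hat g(u)\times v\mid f(y)=w\}$ is the image of the finite set $\{(x,y)\in u\times v\mid g(x,y)=w\}$ under $(x,y)\mapsto(g_x,y)$. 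Conversely, given $h\colon\X\rarr [\Y,\mathbb{W}]$, the set $\{(x,y)\in u\times v\mid h(x)(y)=w\}$ decomposes as a disjoint union, indexed by the finite set $\{(f,y)\in h(u)\times v\mid f(y)=w\}$ (finite because $h(u)\in \cF([\Y,\mathbb{W}])$), of the finite fibres $(h^{-1}(\{f\})\cap u)\times\{y\}$ (each finite because $h^{-1}(\{f\})\in \cF(\X)^\perp$). The two assignments $g\leftrightarrow\hat g$ are then mutually inverse and natural in $\X$ and $\mathbb{W}$ by direct computation.

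The main obstacle is pinning down both ingredients of $[\Y,\mathbb{W}]$ correctly: the exclusion of $\varnothing$ from $|[\Y,\mathbb{W}]|$ is essential (without it, currying produces a total function, not a partial one), and the $\perp\perp$-closure of $\cF([\Y,\mathbb{W}])$ relies crucially on the fact that singletons are always finitary. Once these are in place, the curry/uncurry verifications and naturality are routine counting arguments.
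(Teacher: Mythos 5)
There is a genuine gap: your finiteness structure on the internal hom is too large, and with it the adjunction fails. Given that every $f \in |[\Y,\mathbb{W}]|$ is itself a morphism (so $v \cap f^{-1}(w)$ is finite for each $v \in \cV$, $w \in W$), your pair condition is equivalent to the paper's condition~($4'$) alone: for all $v \in \cV$ and $w \in W$, only finitely many $f \in S$ have $w \in f(v)$. The paper instead defines the hom structure by condition~(4) and proves (4) $\Leftrightarrow$ (3) $\wedge$ ($4'$), where (3) is the image condition $\bigcup_{f \in S} f(v) \in \cF(\mathbb{W})$ for all $v \in \cV$. You have dropped (3), and it does not follow from ($4'$). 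Concretely, take $\Y = (\{*\},\cP(\{*\}))$ and $\mathbb{W} = (\NN,\cW)$ with $\cW$ the finite subsets of $\NN$ (so $\cW^\perp = \cP(\NN)$). Every non-empty partial function $\{*\} \rightharpoonup \NN$ is a morphism, so $|[\Y,\mathbb{W}]| = \{f_n \mid n \in \NN\}$ where $f_n(*) = n$; your structure declares \emph{every} $S \seq |[\Y,\mathbb{W}]|$ finitary (for fixed $w = n$ and $v = \{*\}$ there is at most one pair, namely $(f_n,*)$), whereas the paper's structure gives exactly the finite $S$. With your structure, $\mathrm{ev}$ maps the finitary set $|[\Y,\mathbb{W}]| \times \{*\}$ onto $\NN \notin \cW$, so $\mathrm{ev}$ violates condition~(1) of Definition~\ref{findef} and is not a morphism of \finpf; equivalently, uncurrying the identity of $[\Y,\mathbb{W}]$ yields $\mathrm{ev}$, which is not a morphism, so your curry/uncurry correspondence is not a bijection and your $[\Y,\mathbb{W}]$ is not a right adjoint to $-\ox\Y$.

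The root of the error is how you use ($2'$): Lemma~\ref{lemma condition (2')} permits replacing (2) by ($2'$) only \emph{in conjunction with} condition~(1), and while (2) alone implies (1) for partial functions (as the paper notes), ($2'$) alone does not. Throughout your argument you verify only ($2'$)-type finiteness — for $\mathrm{ev}$ ``by definition,'' and for the uncurried $g$ via the disjoint-union decomposition — but never the image condition~(1); in the counterexample above the uncurried map satisfies ($2'$) while failing (1) and hence (2). The repair is exactly the paper's definition: add condition~(3) to $\cF([\Y,\mathbb{W}])$ (equivalently, define it by (4), whose $\perp\perp$-closure the paper gets from the sets $\left<u,v'\right>$ much as in your closure argument). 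Then $\mathrm{ev}(S \times v) \seq \bigcup_{f \in S} f(v)$ and $g(u \times v) = \bigcup_{f \in h(u)} f(v)$ give condition~(1) for the evaluation and the uncurried map, and your remaining counting arguments go through essentially as in the paper's proof.
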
 

\begin{proof}
Let $(X,\cU)$ and $(Y,\cV)$ be two finiteness spaces. We define the finiteness space $[(X,\cU),(Y,\cV)]$ as follows. Let $A$ be the set
$$A= \{f\in \finpf((X,\cU),(Y,\cV)) \,|\, f \text{ is not the empty partial function} \}$$
and let $\cW$ be the set
\begin{align*}
\cW&=\{w \subseteq A \,|\, w \text{ satisfies (4)}\}\\
&= \{w \subseteq A \,|\, w \text{ satisfies (3) and ($4^\prime$)}\} \subseteq \cP(A)
\end{align*}
where conditions~(3), (4) and ($4^\prime$) are defined as follows:
\begin{itemize}
\item[(3)] for each $u \in \cU$, the union $\bigcup_{f \in w} f(u)$ is in $\cV$,
\item[(4)] for each $u \in \cU$ and each $v' \in \cV^\perp$, the set $\{f \in w \,|\, f(u) \cap v' \neq \varnothing\}$ is finite,
\item[($4^\prime$)] for each $u \in \cU$ and each $y \in Y$, the set $\{f \in w \,|\, y \in f(u)\}$ is finite.
\end{itemize}
It is easy to see that condition~(4) implies condition~($4^\prime$). It also implies condition~(3): 
Given $u \in \cU$ and $v' \in \cV^\perp$, let us denote by $\left<u,v'\right>$
the set $$\left<u,v'\right>=\{f \in A \,|\, f(u) \cap v' \neq \varnothing\}.$$
Then, the set
$$\left(\bigcup_{f \in w} f(u)\right) \cap v' = \bigcup_{f \in w} (f(u) \cap v') = \bigcup_{f \in w \cap \left<u,v'\right>} (f(u) \cap v')$$
is finite since $w \cap \left<u,v'\right>$ is and all $f(u) \cap v'$ are. Conversely, the conjunction of conditions~(3) 
and ($4^\prime$) implies condition~(4). Indeed, for $u \in \cU$ and $v' \in \cV^\perp$, the set
$$\{f \in w \,|\, f(u) \cap v' \neq \varnothing\} = 
\bigcup_{y \in v'} \{f \in w \,|\, y \in f(u)\}=\bigcup_{y \in v'\cap \bigcup_{f \in w} f(u)} \{f \in w \,|\, y \in f(u)\}$$
is finite, being a finite union of finite sets.

Let us now prove that $(A,\cW)$ is a finiteness space. We need to show that $\cW^{\perp\perp} \subseteq \cW$. In view of condition~(4), given $u \in \cU$ and $v' \in \cV^\perp$, the set
$\left<u,v'\right>$ belongs to $\cW^\perp$. This means that for $w \in \cW^{\perp\perp}$, the set $w \cap \left<u,v'\right>=\{f \in w \,|\, f(u) \cap v' \neq \varnothing\}$ is finite and $w\in \cW$. We can thus define $[(X,\cU),(Y,\cV)]$ as the finiteness space $(A,\cW)$.

We now define the partial function
$$ev\colon [(X,\cU),(Y,\cV)] \ox (X,\cU) \rightarrow (Y,\cV)$$
by $$ev(f,x)=\begin{cases}f(x) \qquad\quad\,\, \text{if }f(x)\text{ is defined}\\ \text{undefined} \quad \text{if }f(x)\text{ is undefined.}\end{cases}$$
Let us show that this is a morphism in \finpf. For any $v' \in \cV^\perp$, $w \in \cW$ and $u \in \cU$, we must show that 
$ev^{-1}(v') \cap (w \times u)$ is finite.
But this set is $$\bigcup_{f \in w} \{(f,x) \,|\, x \in u \cap f^{-1}(v')\} = \bigcup_{f \in w\cap \left<u,v'\right>} \{(f,x) \,|\, x \in u \cap f^{-1}(v')\}$$
which is finite since $w\cap \left<u,v'\right>$ is and all $u \cap f^{-1}(v')$ are.

Now let $(Z,\cT)$ be a finiteness space and $g \colon (Z,\cT)\ox(X,\cU) \to (Y,\cV)$ a morphism in \finpf. The unique morphism $h\colon (Z,\cT)\to [(X,\cU),(Y,\cV)]$ making the diagram
$$\cd{(Z,\cT)\ox (X,\cU) \ar[rd]^-{g} \ar[d]_-{h \ox (X,\mathcal{U})} \\ [(X,\cU),(Y,\cV)]\ox (X,\cU) \ar[r]_-{ev} & (Y,\cV)}$$
commutative has to be defined via
$$h(z)=\begin{cases}g(z,-) \qquad\quad\, \text{if }g(z,-) \text{ is not the empty partial function}\\ \text{undefined} \qquad \text{if }g(z,-) \text{ is the empty partial function.}\end{cases}$$
It remains to prove $h$ is a well-defined morphism in \finpf. First, let us show that for $z \in Z$, the partial function $g(z,-)$ is a morphism $(X,\cU)\to (Y,\cV)$.
For $u \in \cU$, $g(z,-)(u)=g(\{z\}\times u)$ which is in $\cV$. So $g(z,-)$ satisfies condition~(1). For condition~($2^\prime$), let $y \in Y$ and $u \in \cU$ and notice that
the set $u \cap g(z,-)^{-1}(y)$ is in bijection with the set $(\{z\} \times u) \cap g^{-1}(y)$ which is finite. To conclude the proof, we still have to show that $h\colon (Z,\cT)\to [(X,\cU),(Y,\cV)]$ is also a morphism in \finpf. For condition~(1), we must show that, given $t \in \cT$, $h(t)$ satisfies (3) and ($4^\prime$). Given $u \in \cU$, the set
$$\bigcup_{f \in h(t)} f(u) = \bigcup_{z \in t} g(z,-)(u)=g(t \times u)$$
is in $\cV$, showing condition~(3). For condition~($4^\prime$), let $u \in \cU$ and $y \in Y$. The first projection
$$g^{-1}(y) \cap (t \times u) \twoheadrightarrow \{z \in t \,|\, y \in g(z,-)(u)\}$$
is a surjection and the assignment $z \mapsto g(z,-)$ is a surjection $$\{z \in t \,|\, y \in g(z,-)(u)\} \twoheadrightarrow \{f \in h(t) \,|\, y \in f(u)\}.$$
Since the set $g^{-1}(y) \cap (t \times u)$ is finite, this demonstrates condition~($4^\prime$). It remains now to prove that $h$ satisfies condition~($2^\prime$). Let $f \in A$ and $t \in \cT$. We need to show that $h^{-1}(f) \cap t$ is finite. Since $f$ is not the empty partial function, we can choose $x\in X$ such that $f(x)$ is defined.
Now, we have an injection
$$h^{-1}(f) \cap t=\{z \in t \,|\, g(z,-)=f\} \to \{(z,x) \,|\, z \in t, g(z,x)=f(x)\}=g^{-1}(f(x)) \cap (t \times \{x\})$$
sending $z$ to $(z,x)$. But since $g^{-1}(f(x)) \cap (t \times \{x\})$ is finite, this concludes the proof.
\end{proof}

Notice that the finiteness space $(\varnothing,\cP(\varnothing))$ is a zero object in \finpf\ (and in \finrel). So the empty partial function $X \to Y$ is actually the zero morphism $(X,\cU)\to (Y,\cV)$. The category \finpf\ also has the following additional advantage.

\begin{proposition} 
The pointed category \finpf\ is complete and cocomplete.
\end{proposition}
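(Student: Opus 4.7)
The plan is to invoke the standard fact that a category with a zero object is complete (resp.\ cocomplete) as soon as it has small products and equalizers (resp.\ small coproducts and coequalizers). The zero object $(\varnothing,\cP(\varnothing))$ is in hand from the preceding remark, so it suffices to construct these four classes of (co)limits.

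Equalizers and coequalizers are concrete. For parallel arrows $f,g\colon(X,\cU)\to(Y,\cV)$, take as equalizer the subset $E=\{x\in X\mid f(x)=g(x)\text{ as partial-function values}\}$ --- both undefined at $x$, or both defined and equal --- equipped with the induced finiteness structure $\cU_E=\cU\cap\cP(E)$; biperp closure $\cU_E^{\perp\perp}=\cU_E$ and the universal property follow routinely from $\cU^{\perp\perp}=\cU$ and subset-closure of $\cU$. For coequalizers, define an equivalence relation $\sim$ on $Y\sqcup\{*\}$ generated by $f(x)\sim g(x)$ for each $x\in X$, interpreting an undefined value as $*$; the coequalizer object is the set $Q$ of non-$*$ classes, with the quotient partial function $q\colon Y\rightharpoonup Q$ and the quotient finiteness structure.

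For small products of $(X_i,\cU_i)_{i\in I}$, the underlying set is the set of non-empty partial tuples
\[P=\bigl\{(x_j)_{j\in J}\ \big|\ \varnothing\ne J\seq I,\ x_j\in X_j\bigr\},\]
with the obvious partial projections $p_i\colon P\rightharpoonup X_i$. The finiteness structure $\cW$ is defined as a biperp closure engineered to enforce simultaneously $p_i(w)\in\cU_i$ and slice-finiteness $|p_i^{-1}(\{x_i\})\cap w|<\infty$ for every $w\in\cW$. The partial function $h(z)=(f_i(z))_{\{i\,:\,f_i(z)\text{ defined}\}}$ assembled from a cone $(f_i\colon(Z,\cT)\to(X_i,\cU_i))_{i\in I}$ is then automatically a morphism, because both conditions~(1) and~(2$'$) for $h$ decompose coordinate-wise into the corresponding conditions on the $f_i$. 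Dually, the coproduct is $C=\bigsqcup_i X_i$ equipped with
\[\cW=\{\,w\seq C\mid w\cap X_i\in\cU_i\text{ for every }i\text{ and }w\text{ meets only finitely many }X_i\,\};\]
the biperp identity $\cW^{\perp\perp}=\cW$ follows from $\cW^\perp=\{w'\mid w'\cap X_i\in\cU_i^\perp\text{ for all }i\}$, which in turn uses that singletons always lie in $\cU_i^\perp$ and hence can detect infinite support.

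The main obstacle will be the product construction. The naive choice $\cW=\{w\mid p_i(w)\in\cU_i\text{ for all }i\}$ fails condition~(2$'$) for the projections: if some $u_i\in\cU_i$ is infinite, the full column $\{x_1\}\times u_2$ lies in $\cF(\X_1\ox\X_2)$ but intersects $p_1^{-1}(\{x_1\})$ in an infinite set. The careful biperp closure --- including singleton slices among the perp-generators --- is what enforces (1) and~(2$'$) simultaneously and makes the construction work uniformly for arbitrary, possibly infinite, index sets~$I$.
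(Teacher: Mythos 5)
Your overall architecture coincides with the paper's: completeness from equalizers plus small products, cocompleteness from coequalizers plus small coproducts. Your equalizer $(E,\cU\cap\cP(E))$ and your coproduct structure are exactly the paper's, and your product is too: the paper takes as underlying set $\prod_i (X_i\amalg\{\star_i\})$ minus the all-$\star$ tuple (your nonempty partial tuples) and takes as finiteness structure $\cW'^\perp$, where $\cW'$ consists of the cylinders $\prod_{j\neq i}X'_j\times u'_i$ with $u'_i\in\cU_i^\perp$; since singletons always lie in $\cU_i^\perp$, this single perp is precisely the class you describe (all $w$ with $p_i(w)\in\cU_i$ and finite slices $p_i^{-1}(x_i)\cap w$), and being a perp it is automatically biperp-closed. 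Your diagnosis of why the naive product structure violates condition~($2'$) for the projections is also the right one.

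However, your coequalizer has a genuine gap. Adjoining a basepoint $*$ for ``undefined'' and taking non-$*$ classes reproduces only the first pruning in the paper's construction (its $Q_2$): it removes the classes forced to be undefined because $f(x)$ is defined while $g(x)$ is not, or vice versa. It misses a second, unavoidable pruning. The quotient map $q$ must satisfy condition~($2'$), i.e.\ $q^{-1}(a)\in\cV^\perp$ for every class $a$, and this condition refers only to the structure on $Y$: no choice of ``quotient finiteness structure'' on $Q$ can repair it. It genuinely fails: let $\cU$ be the finite subsets of $\NN$ (so $\cU^\perp=\cP(\NN)$) and $f,g\colon(\NN,\cU)\to(\NN,\cP(\NN))$ be $f(n)=2n$, $g(n)=2n+2$; both are morphisms of \finpf, both are total (so no $*$-merging occurs), yet the generated relation merges all even numbers into a single class $a$ with $q^{-1}(a)$ infinite, while $\cP(\NN)^\perp$ consists of the finite sets. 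So your $q$ is not a morphism. The paper's fix (its $Q_3$) is to delete every class $a$ with $q_1^{-1}(a)\notin\cV^\perp$ and let the quotient be a partial surjection undefined there; deleting such classes from the object, rather than merely leaving $q$ undefined on them, is also needed for uniqueness of the induced map, since a factorization is constrained only on the image of $q$. This deletion creates the one nontrivial step of the universal property, absent from your sketch: given $h$ with $hf=hg$, one must show every $y\in Dom(h)$ lands in a surviving class, via $q_1^{-1}(q_1(y))\seq h^{-1}(h(y))\in\cV^\perp$ (together with the check that $y$ is not merged with a one-sidedly defined value), so that the factorization $kq_3=h$ exists at all.
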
 

\begin{proof}
Let us start showing that \finpf\ has equalisers. Given two parallel morphisms $$\cd{(X,\cU) \ar@<2pt>[r]^-{f} \ar@<-2pt>[r]_-{g} & (Y,\cV)}$$ in \finpf, let us consider the set
\begin{alignat*}{2}
E&= \{x \in X \,|\, f(\{x\})=g(\{x\})\}\\
&= \{x \in X \,|\, \text{either both }f(x) \text{ and }g(x) \text{ are undefined}\\
&\hspace{62pt} \text{or they are both defined and }f(x)=g(x)\}.
\end{alignat*}
Let also $\cW \subseteq \cP(E)$ be $\cW=\{u \in \cU \,|\, u \subseteq E\}$. Then it is routine to show that $$\cW^\perp=\{u' \in \cU^\perp \,|\, u' \subseteq E\},$$
$(E,\cW)$ is a finiteness space and the inclusion $(E,\cW)\hookrightarrow (X,\cU)$ is the equalizer of $f$ and $g$ in \finpf.

Now let $I$ be a set and $(X_i,\cU_i)$ a finiteness space for each $i \in I$. Let us construct the product $\prod_{i \in I} (X_i,\cU_i)$.
For each $i \in I$, we denote by $X'_i$ the disjoint union $X_i \coprod \{\star_i\}$. We consider the product
$$P= \left( \prod_{i \in I} X'_i \right) \setminus \left\{(\star_i)_{i \in I}\right\}$$
and
$$\cW'= \bigcup_{i \in I} \left\{\prod_{j \in I\setminus \{i\}}X'_j \times u'_i \,|\, u'_i \in \cU_i^\perp \right\} \subseteq \cP(P).$$
Then, $(P,\cW'^\perp)$ is a finiteness space and for each $i \in I$, we have a morphism $\pi_i\colon (P,\cW'^\perp)\to (X_i,\cU_i)$ given by
\begin{align*}\pi_i((x'_j)_{j \in I})=\begin{cases}x'_i \qquad &\text{if }x'_i\in X_i\\ \text{undefined} \quad &\text{if }x'_i=\star_i.\end{cases}\end{align*}
This forms the desired product in \finpf. Indeed, let $(Z,\cT)$ be a finiteness space and, for each $i \in I$, $f_i$ be a morphism $(Z,\cT)\to (X_i,\cU_i)$. Then, the unique morphism $g\colon (Z,\cT) \to (P,\cW'^\perp)$ such that $\pi_i g = f_i$ for each $i \in I$ is given by
\begin{align*}
g(z)=\begin{cases}(f'_i(z))_{i \in I} \quad &\text{if there exists }i \in I \text{ such that }z \in Dom(f_i)\\ \text{undefined} \quad &\text{if }f_i(z) \text{ is undefined for all }i \in I\end{cases}
\end{align*}
where $f'_i\colon Z \to X'_i$ is the function defined by
\begin{align*}
f'_i(z)=\begin{cases}f_i(z) \quad &\text{if }z \in Dom(f_i)\\ \star_i \quad &\text{if }z \notin Dom(f_i).\end{cases}
\end{align*}
Let us demonstate that this $g$ indeed satisfies conditions~(1) and ($2^\prime$) for being a morphism in \finpf. For (1), let $t \in \cT$, $i \in I$ and $u'_i \in \cU_i^\perp$.
The set $$g(t) \cap \left(\prod_{j \in I\setminus \{i\}}X'_j \times u'_i \right)=\left\{g(z) \,|\, z \in t \cap {f'_i}^{-1}(u'_i)\right\}=\left\{g(z) \,|\, z \in t \cap f_i^{-1}(u'_i)\right\}$$ is finite since $t \cap f_i^{-1}(u'_i)$ is. This proves that $g(t) \in \cW'^\perp$. For condition~($2^\prime$), let $(x'_j)_{j \in I}$ be an element of $P$. By construction of $P$, there exists $i \in I$ such that $x'_i \in X_i$. Therefore,
$$g^{-1}((x'_j)_{j \in I})\subseteq f_i^{-1}(x'_i) \in \cT^\perp$$
since $f_i$ satisfies ($2^\prime$). Thus $g$ is indeed a morphism in \finpf. This shows that \finpf\ is complete.

We now prove that \finpf\ has coequalisers. Let $f,g\colon (X,\cU)\rightrightarrows (Y,\cV)$ be two morphisms. We first consider the (set-theoretical) quotient
$$Q_1 = Y / R$$ and $q_1\colon Y \twoheadrightarrow Q_1$ the corresponding quotient map where $R$ is the smallest equivalence relation on $Y$ such that $f(x) R g(x)$ for all $x \in Dom(f) \cap Dom(g)$. Then, we consider $Q_2$, the subset of $Q_1$ defined by
$$Q_2=Q_1 \setminus \left(\left\{q_1(f(x)) \,|\, x \in Dom(f) \cap Dom(g)^C\right\} \cup \left\{q_1(g(x)) \,|\, x \in Dom(f)^C \cap Dom(g)\right\}\right)$$
where $Dom(f)^C$ and $Dom(g)^C$ denote as usual the complements in $X$ of $Dom(f)$ and $Dom(g)$ respectively. Finally, we consider $Q_3$, the subset of $Q_2$ defined by
$$Q_3=\left\{a \in Q_2 \,|\, q_1^{-1}(a) \in \cV^\perp \right\}$$ together with the partial (surjective) function $q_3\colon Y\twoheadrightarrow Q_3$ given by
\begin{align*}
q_3(y)=\begin{cases}q_1(y) \quad &\text{if }q_1(y)\in Q_3\\ \text{undefined} \quad &\text{if } q_1(y)\notin Q_3.\end{cases}
\end{align*}
Suppose also that 
$$\cW=\left\{q_3(v) \,|\, v \in \cV\right\} \subseteq \cP(Q_3)$$ 
which induces the finiteness space $(Q_3,\cW^{\perp\perp})$. By construction, we know that $q_3$ gives rise to a
morphism $q_3\colon (Y,\cV) \to (Q_3,\cW^{\perp\perp})$ since it obviously satisfies conditions~(1) and ($2^\prime$). This morphism satisfies $q_3f=q_3g$. Given a morphism $h\colon (Y,\cV) \to (Z,\cT)$ such that $hf=hg$, we can construct a partial function $k \colon Q_3 \to Z$ via
\begin{align*}k(q_3(y))=\begin{cases}h(y) \quad &\text{if } y \in Dom(h)\\ \text{undefined} \quad &\text{if } y \notin Dom(h).\end{cases}\end{align*}
This partial function is well-defined since $R \subseteq R_h$ where $R_h$ is the equivalence relation on $Y$ defined by
\begin{align*}
y R_h y' &\Leftrightarrow h(\{y\})=h(\{y'\})\\ &\Leftrightarrow h(y)=h(y') \text{ (both being defined) or both } h(y) \text{ and } h(y') \text{ are undefined.}
\end{align*}
To prove that $kq_3=h$, the only non-trivial part is to show that for $y \in Dom(h)$, $q_3(y)$ is defined, i.e., $q_1(y) \in Q_3$. If $q_1(y)=q_1(f(x))$ for some $x \in Dom(f) \cap Dom(g)^C$, then $$y R f(x) \Rightarrow y R_h f(x) \Rightarrow f(x) \in Dom(h)$$ which is a contradiction. A similar conclusion holds if $q_1(y)=q_1(g(x))$ for some $x \in Dom(f)^C \cap Dom(g)$. Thus $q_1(y) \in Q_2$. Now, we know that
$$q_1^{-1}(q_1(y)) \subseteq h^{-1}(h(y)) \in \cV^\perp$$
where the first inclusion holds since
$$q_1(y')=q_1(y) \,\,\Rightarrow\,\, y' R y \,\,\Rightarrow\,\, y' R_h y \,\,\Rightarrow\,\, h(y')=h(y).$$
This proves $q_1(y) \in Q_3$ and $kq_3=h$. Moreover, $k$ is the only partial function $Q_3 \to Z$ satisfying this equation. It remains to prove it satisfies condition~(2) for being a morphism $(Q_3,\cW^{\perp\perp}) \to (Z,\cT)$. 
So let $t' \in \cT^\perp$. We have to show that
$$k^{-1}(t')=\{a \in Q_3 \,|\, k(a)\in t'\}=\{q_3(y) \,|\, h(y) \in t'\}=q_3(h^{-1}(t'))$$
is in $\cW^\perp$. Let $v \in \cV$. We obviously have $q_3(h^{-1}(t') \cap v) \subseteq q_3(h^{-1}(t')) \cap q_3(v)$. Conversely, suppose $q_3(y_1)=q_3(y_2)$ with $y_1 \in h^{-1}(t')$ and $y_2 \in v$. This implies $k(q_3(y_1))=h(y_1) \in t'$ and so $k(q_3(y_2))$ is defined and belongs to $t'$. Hence $h(y_2) \in t'$ and $y_2 \in h^{-1}(t')$.
This proves $$q_3(h^{-1}(t')) \cap q_3(v) = q_3(h^{-1}(t') \cap v).$$ Since $h^{-1}(t') \cap v$ is finite, this shows that $q_3(h^{-1}(t')) \in \cW^\perp$.

It now remains to prove the existence of small coproducts in \finpf. Let $I$ be a set and $(X_i,\cU_i)$ be a finiteness space for each $i \in I$.
We consider the disjoint union $\coprod_{i \in I} X_i$ and
$$\cW=\left\{u_{i_1} \amalg \cdots \amalg u_{i_n} \,|\, i_1,\dots,i_n \in I \text{ and } u_{i_k} \in \cU_{i_k} \text{ for each }1\leqslant k \leqslant n\right\}\subseteq \cP\left(\coprod_{i \in I} X_i\right).$$
It is easy to prove that $$\cW^\perp=\left\{\coprod_{i \in I} u'_i \,|\, u'_i \in \cU_i^\perp \text{ for each }i \in I\right\}$$
and $\cW^{\perp\perp}=\cW$. So $\left(\coprod_{i \in I} X_i,\cW\right)$ is a finiteness space. For each $i \in I$, let $$s_i\colon \left(X_i,\cU_i\right) \to \left(\coprod_{j \in I} X_j,\cW\right)$$ be the canonical injection, which is obviously a morphism in \finpf. Given a finiteness space $(Z,\cT)$ with, for each $i \in I$, a morphism $f_i\colon (X_i,\cU_i) \to (Z,\cT)$, we define the partial function $g \colon \coprod_{i \in I} X_i \to Z$ by
\begin{align*}
g(x_i)=\begin{cases}f_i(x_i) \quad &\text{if }x_i \in Dom(f_i)\\ \text{undefined} \quad &\text{if }x_i \notin Dom(f_i)\end{cases}
\end{align*}
for each $x_i \in X_i$. This gives a morphism $g\colon \left(\coprod_{i \in I} X_i,\cW\right) \to (Z,\cT)$ since, for each $t'\in \cT^\perp$,
$$g^{-1}(t')=\coprod_{i \in I} f_i^{-1}(t') \in \cW^\perp.$$
Moreover, we have $gs_i=f_i$ for each $i \in I$ and $g$ is the unique such morphism, proving that $\left(\coprod_{i \in I} X_i,\cW\right)$ is the expected coproduct. So \finpf\ is cocomplete.
\end{proof}

\section{Posets as finiteness spaces, pomonoids as finiteness monoids}

The goal of this section is to explain how we can see a strict pomonoid as a monoid in \finf, and why this is not the case for a general pomonoid. We then generalize Ribenboim's construction to the case of monoids in \finf, and even in \finpf. This will give us a better understanding why Ribenboim needs this strictness assumption when defining the ring $G(M,R)$ of Theorem~\ref{theorem Ribenboim construction}.

\subsection{Posets as finiteness spaces}

\begin{theorem} \label{main}\label{pomfin}
Let $(P,\leq)$ be a poset. Let $\cU$ be the set of artinian and narrow subsets. Then $(P,\cU)$ is a finiteness space.
\end{theorem}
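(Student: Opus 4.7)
The plan is to verify that $\cU^{\perp\perp} = \cU$, which reduces to checking the inclusion $\cU^{\perp\perp} \subseteq \cU$ since the reverse inclusion always holds by the remarks in Definition~\ref{findef}. I will prove this by contrapositive: given $w \subseteq P$ with $w \notin \cU$, I will produce a witness $u' \in \cU^\perp$ such that $w \cap u'$ is infinite, thereby ruling out $w \in \cU^{\perp\perp}$.

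If $w$ fails to be artinian, then $w$ contains an infinite strictly descending chain $C = \{m_1 > m_2 > m_3 > \cdots\}$. If instead $w$ is artinian but fails to be narrow, then $w$ contains an infinite antichain $A = \{a_1, a_2, \ldots\}$ of pairwise incomparable elements. In either case, the witness $u'$ will be the offending subset itself ($C$ or $A$), since $w \cap u' = u'$ is infinite.

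The only real content, then, is to verify that $C, A \in \cU^\perp$. For any $u \in \cU$ (artinian and narrow), the intersection $C \cap u$ is a totally ordered subset of $u$; since $u$ is artinian and $C$ is strictly descending, any infinite subset of $C$ is itself a strictly descending chain, which would contradict the artinian property of $u$, so $C \cap u$ is finite. Similarly, $A \cap u$ is an antichain inside the narrow set $u$, hence finite. Thus $C \in \cU^\perp$ and $A \in \cU^\perp$, completing the argument.

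There is no real obstacle: the proof is essentially a direct unpacking of the definitions, with the two defining conditions of $\cU$ matching up perfectly with the two ways an infinite poset can fail to be in $\cU$ (infinite descending chain versus infinite antichain). The only mildly subtle point is noting that an infinite subset of a strictly descending chain is itself a strictly descending chain, which is immediate once one indexes the chain by $\NN$.
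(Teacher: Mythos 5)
Your proof is correct, but it takes a genuinely different route from the paper. The paper proves the theorem by computing the dual exactly: Lemma~\ref{lemma U perp = noetherian} shows that $\cU^\perp$ is precisely the set of noetherian subsets of $P$, and Lemma~\ref{lemma V perp = U} shows that the dual of the noetherian subsets is $\cU$; the nontrivial direction of the first lemma rests on Lemma~\ref{fund} (an artinian and noetherian poset is narrow if and only if it is finite), which the paper derives from the infinite Ramsey theorem and the Axiom of Choice. You never identify $\cU^\perp$ at all: you prove the one nontrivial inclusion $\cU^{\perp\perp} \seq \cU$ by contrapositive, exhibiting for each $w \notin \cU$ a witness in $\cU^\perp$ --- an infinite strictly descending chain when $w$ is not artinian, an infinite antichain when $w$ is not narrow --- and checking directly that such a chain or antichain meets every artinian and narrow set in a finite set. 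This is strictly more elementary: it avoids Ramsey's theorem and Lemma~\ref{fund} entirely, and each step is a definition-chase (your observation that an infinite subset of a strictly descending chain is again such a chain is the only point requiring a word, and you supply it). What your argument does \emph{not} buy is the explicit description $\cU^\perp = \{\text{noetherian subsets}\}$, which the paper puts to work later: it explains why the construction fails to be functorial on ${\sf Pos}$ (inverse images of noetherian sets need not be noetherian) and underlies the verification that strict maps give morphisms in \finf\ (Proposition~\ref{proposition E functor}), and Lemma~\ref{fund} is reused in proving that $E$ is monoidal. So the paper's heavier machinery is amortized over the rest of the development, while your proof is the better self-contained argument for the theorem in isolation.
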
 
\begin{proof}
This follows from Lemmas~\ref{lemma U perp = noetherian} and~\ref{lemma V perp = U} below.
\end{proof}

\begin{lemma} \label{lemma U perp = noetherian}
Under the above assumptions, $\cU^\perp$ is the set of noetherian subsets of $P$.
\end{lemma}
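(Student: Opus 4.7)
The plan is to prove the two inclusions $\cU^\perp \subseteq \{\text{noetherian}\}$ and $\{\text{noetherian}\} \subseteq \cU^\perp$ directly, exploiting Lemma~\ref{fund} on the easy side and a contradiction argument on the other.

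For the inclusion $\{\text{noetherian subsets}\} \subseteq \cU^\perp$, I would take any noetherian $u' \subseteq P$ and any $u \in \cU$ (artinian and narrow) and look at the intersection $u \cap u'$. Being a subset of $u$, it inherits artinian-ness and narrowness; being a subset of $u'$, it inherits noetherian-ness. So $u \cap u'$ is artinian, noetherian and narrow, hence finite by Lemma~\ref{fund}. This shows $u' \in \cU^\perp$.

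For the reverse inclusion, I would argue by contrapositive. If $u' \subseteq P$ is not noetherian, it contains an infinite strictly ascending chain $a_1 < a_2 < a_3 < \cdots$. Let $C = \{a_n \mid n \in \NN\} \subseteq u'$. Then $C$ is totally ordered, so it is narrow (it has no nontrivial antichains). It is also artinian: any strictly descending sequence in $C$ corresponds to a strictly decreasing sequence of indices in $\NN$, which must be finite. Therefore $C \in \cU$, but $u' \cap C = C$ is infinite, so $u' \notin \cU^\perp$.

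There is no real obstacle; the argument is clean once Lemma~\ref{fund} is available, and the only thing to be careful about is checking that an infinite strictly ascending chain, viewed as a subset of $P$, is itself both artinian and narrow so that it qualifies as a witness in $\cU$. Combining the two directions gives the claimed equality $\cU^\perp = \{u' \subseteq P \mid u' \text{ is noetherian}\}$.
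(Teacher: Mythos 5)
Your proof is correct and follows essentially the same route as the paper: the forward inclusion via contradiction/contrapositive using an infinite ascending chain as a witness in $\cU$, and the reverse inclusion by noting that $u \cap u'$ is artinian, narrow and noetherian, hence finite by Lemma~\ref{fund}. Your only addition is to spell out why the ascending chain is artinian and narrow, which the paper dismisses as evident.
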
 
\begin{proof} Let $u'\in\cU^\perp$. Suppose $u'$ is not noetherian. So it has an infinite ascending chain, call this chain $C \seq u'$. $C$ is evidently 
artinian and narrow. So $C\in\cU$. But $C\cap u'=C$ which is infinite.

Conversely, suppose that $u'\seq P$ is noetherian. We must show that for all $u\in\cU$, we have that $u\cap u'$ is finite. This will follow from
Lemma~\ref{fund}.

\begin{itemize}
\item  $u\cap u'$ is narrow and artinian since it is contained in $u$.
\item  $u\cap u'$ is noetherian since it is contained in $u'$. 
\end{itemize}
\end{proof}

\begin{lemma} \label{lemma V perp = U}
Under the above assumptions, if $\cV$ is the set of noetherian subsets of $P$, then $\cV^\perp=\cU$.
\end{lemma}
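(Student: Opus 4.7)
The plan is to prove $\cV^\perp = \cU$ by showing both inclusions directly, leveraging Lemma~\ref{fund} (the Ramsey-type equivalence) for the forward direction and explicit constructions of witnesses in $\cV$ for the reverse.

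For the inclusion $\cU \subseteq \cV^\perp$, I would take an arbitrary $u \in \cU$ and $v \in \cV$ and argue that $u \cap v$ is simultaneously artinian and narrow (because these properties are inherited from $u$) and noetherian (because it is inherited from $v$). Lemma~\ref{fund} then immediately forces $u \cap v$ to be finite, which is exactly the defining condition for $u \in \cV^\perp$.

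For the reverse inclusion $\cV^\perp \subseteq \cU$, I would proceed by contrapositive. Suppose $u \notin \cU$; then $u$ either fails to be artinian or fails to be narrow, and in each case I must exhibit a noetherian subset $v$ with $u \cap v$ infinite. If $u$ is not artinian, let $v = \{m_1 > m_2 > m_3 > \cdots\}$ be an infinite strictly descending chain contained in $u$; one checks that $v$ is noetherian, since any strictly ascending chain inside $v$ corresponds to a strictly descending sequence of natural-number indices and must therefore be finite. If $u$ is not narrow, let $v \subseteq u$ be an infinite antichain; then $v$ is trivially noetherian, containing no nontrivial chain at all. In both situations $v \in \cV$ while $u \cap v = v$ is infinite, so $u \notin \cV^\perp$.

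The only substantive point is verifying that an infinite strictly descending chain is noetherian, and this is a short index-chasing observation rather than a genuine obstacle. Everything else is bookkeeping on top of Lemma~\ref{fund} and the previous Lemma~\ref{lemma U perp = noetherian}. (Indeed, once this lemma is established, it combines with Lemma~\ref{lemma U perp = noetherian} to give $\cU^{\perp\perp} = \cV^\perp = \cU$, completing the proof of Theorem~\ref{pomfin}.)
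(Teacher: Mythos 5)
Your proof is correct and follows essentially the same route as the paper: the substantive inclusion $\cV^\perp \seq \cU$ is established exactly as in the paper's proof, by exhibiting an infinite descending chain (checked to be noetherian) or an infinite antichain as a witness in $\cV$ meeting the set infinitely. The only cosmetic difference is in the easy inclusion $\cU \seq \cV^\perp$, where the paper simply cites $\cU \seq \cU^{\perp\perp} = \cV^\perp$ via Lemma~\ref{lemma U perp = noetherian}, whereas you re-run the direct argument through Lemma~\ref{fund}; this is the same computation unfolded, since that direct argument is precisely the content of the cited lemma.
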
 

\begin{proof}
Let $v'\in\cV^\perp$. Suppose $v'$ is not narrow. So $v'$ has an infinite discrete subset, call it $D$. Note 
that a discrete subset is noetherian and then argue as above. Suppose $v'$ is not artinian. Then it has an infinite
descending chain, which is necessarily noetherian. Again argue as above. This proves $\cV^\perp \seq \cU$. Conversely, notice that $\cU \seq \cU^{\perp\perp}=\cV^\perp$.
\end{proof}

\subsection{Finiteness monoids}

We now want to show that the construction
$$(P,\leq)\mapsto(P,\cU)$$
of Theorem~\ref{pomfin} is functorial. Unfortunately, if we consider it from the usual category {\sf Pos} of posets to any of the 
categories of finiteness spaces we have considered, this is not the case. Indeed, the inverse image under an order-preserving map of a noetherian subset may be not noetherian. However, the problem disappears if we consider strict maps.

\begin{definition} {\em If $(P,\leq)$ and $(Q,\leq)$ are two posets, a map $f\colon P \to Q$ is said to be {\it strict} if $p<p'$ implies $f(p)<f(p')$. In particular, it is a morphism of posets. We denote the category of posets and strict maps by {\sf StrPos}. 
}\end{definition} 

It is now easy to check the following result.

\begin{proposition} \label{proposition E functor}
There is a functor $E \colon {\sf StrPos} \rarr \finf$ defined on objects via the construction of Theorem~\ref{pomfin} and on arrows via $E(f)=f$.
\end{proposition}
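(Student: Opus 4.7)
My plan is to reduce the statement to checking that each strict poset map $f\colon (P,\leq)\to(Q,\leq)$ qualifies as a morphism in \finf\ when $P$ and $Q$ are equipped with their artinian-and-narrow finiteness structures from Theorem~\ref{pomfin}. Since $E$ acts as the identity on underlying functions, preservation of identities and composition is automatic, so the only substantive task is to verify conditions~(1) and~($2^\prime$) of Definition~\ref{findef} (using Lemma~\ref{lemma condition (2')} to replace~(2) by the pointwise ($2^\prime$)). Write $\cU\seq\cP(P)$ and $\cV\seq\cP(Q)$ for the two finiteness structures.

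The easy half is ($2^\prime$). Since $f$ is strict, each fibre $f^{-1}(\{q\})$ is an antichain in $P$: a relation $p<p'$ inside the fibre would force $q=f(p)<f(p')=q$. An antichain has no nontrivial ascending chain, so it is trivially noetherian, and by Lemma~\ref{lemma U perp = noetherian} lies in $\cU^\perp$.

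For~(1), I need that $u\in\cU$ artinian and narrow implies $f(u)\in\cV$. Narrowness is immediate: lifting any antichain in $f(u)$ to one preimage per element yields, by the same strictness argument, an antichain in $u$ of the same cardinality, forced to be finite. The main obstacle --- and the only step requiring real content --- will be preserving artinianness. My plan is to suppose toward contradiction that $f(u)$ contains an infinite strictly descending chain $q_1>q_2>\cdots$ and pick distinct preimages $p_i\in u$ (distinct because the $q_i$ are). The set $P_0=\{p_i\mid i\in\NN\}$ inherits both artinianness and narrowness from $u$ but is infinite, so Lemma~\ref{fund} tells me it cannot be noetherian and must contain an infinite strictly ascending chain $p_{i_1}<p_{i_2}<\cdots$. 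Applying strict $f$ yields $q_{i_1}<q_{i_2}<\cdots$, an ascending chain sitting inside the totally ordered descending sequence $(q_n)$; but within that sequence $q_a<q_b$ forces $a>b$ in $\NN$, producing an infinite strictly decreasing sequence of natural numbers, the desired contradiction.
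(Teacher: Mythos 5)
Your proof is correct, but it takes a noticeably different (and heavier) route than the one the paper intends. The paper states the proposition with ``it is now easy to check,'' and the preceding paragraph signposts the intended argument: by Lemma~\ref{lemma U perp = noetherian}, $\cU^\perp$ consists exactly of the noetherian subsets, so condition~(2) amounts to ``preimages of noetherian sets are noetherian,'' which is a one-line consequence of strictness (an infinite ascending chain $p_1<p_2<\cdots$ in $f^{-1}(v')$ would map to an infinite ascending chain in $v'$); condition~(1) then comes for free, since Section~\ref{subsection other choices of morphisms} observes that any (partial) function satisfying~(2) automatically satisfies~(1). You instead verify~($2^\prime$) pointwise (fibres of a strict map are antichains, hence noetherian --- fine) and then prove~(1) head-on, showing the image of an artinian and narrow set is artinian and narrow; your artinian step invokes Lemma~\ref{fund}, i.e.\ the Ramsey-type dichotomy, to extract an infinite ascending chain from the infinite set of preimages, and the index-reversal trick closing the contradiction is carried out correctly (the preimages are distinct because the $q_i$ are, so the indices $i_j$ are well defined). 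The trade-off: your argument is a self-contained combinatorial verification of image-finitariness, which is instructive in its own right, but it spends the full strength of Lemma~\ref{fund} (and hence Ramsey/choice) on a fact the paper gets for free from the duality between~(2) and~(1); checking~(2) directly, rather than~($2^\prime$) plus~(1), would have reduced your whole proof to two lines.
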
 

\begin{definition} 
{\em  A {\em finiteness monoid} (respectively a {\em partial finiteness monoid}) is an internal monoid in \finf\ (respectively in \finpf), where we consider the monoidal structures of \finf\ and \finpf\ described in the beginning of Section~\ref{subsection other choices of morphisms}.
}\end{definition} 

We wish to prove that every strict pomonoid induces a finiteness monoid. There is a direct proof of this result, but it is quite grisly. We prefer to use the functorial construction $E \colon {\sf StrPos} \rarr \finf$ of Proposition~\ref{proposition E functor}. For that, we need a further step: We consider in {\sf StrPos} the symmetric monoidal structure where the tensor product is given by the cartesian product in {\sf Pos}. Therefore, the inclusion
$${\sf StrPos} \hookrightarrow {\sf Pos}$$
is a bijective on objects, (strict) symmetric monoidal functor. With that monoidal structure, we can now say that a strict pomonoid is just an internal monoid in {\sf StrPos}. Moreover, we have:

\begin{lemma} 
The functor $E \colon {\sf StrPos} \rarr \finf$ is a strict symmetric monoidal functor.
\end{lemma}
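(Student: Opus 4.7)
Since $E$ is the identity on underlying sets of objects and on morphisms, being a strict symmetric monoidal functor reduces to three things: that $E$ sends the unit to the unit on the nose, that it sends tensor products of objects to tensor products on the nose, and that it respects the coherence isomorphisms. The third is automatic because the associator, unitors, and symmetry in both {\sf StrPos} and \finf\ are the standard set-theoretic bijections on cartesian products, and $E$ leaves underlying sets and functions unchanged. The unit is also immediate: the unit of {\sf StrPos} is the one-point poset $\{*\}$, every subset of which is vacuously artinian and narrow, so $E(\{*\}) = (\{*\}, \cP(\{*\})) = I$.

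The real content is to show that for posets $P$ and $Q$ the equality $E(P \times Q) = E(P) \ox E(Q)$ holds on the nose, i.e., that the artinian and narrow subsets of $P \times Q$ under the product order coincide with $\{w \subseteq P \times Q \,|\, \exists u \in \cF(E(P)),\, \exists v \in \cF(E(Q)),\, w \subseteq u \times v\}$. The plan is to use the Ramsey-theoretic characterization implicit in the proof of Lemma~\ref{fund}: a poset is artinian and narrow if and only if every infinite sequence in it admits an infinite weakly increasing subsequence. (Color each pair of indices by $\leq$, $>$, or incomparable and apply Ramsey; a monochromatic subsequence of color $>$ yields an infinite strictly descending chain, and one of color incomparable yields an infinite antichain, both excluded.)

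For the inclusion $\supseteq$, given artinian and narrow $u \subseteq P$ and $v \subseteq Q$, I will show $u \times v$ is itself artinian and narrow, whence every $w \subseteq u \times v$ is too: from any infinite sequence in $u \times v$, first extract an infinite subsequence weakly increasing in the first coordinate using the characterization for $u$, and then extract from that an infinite sub-subsequence also weakly increasing in the second coordinate using the characterization for $v$; the result is weakly increasing in the product order. For the inclusion $\subseteq$, given an artinian and narrow $w \subseteq P \times Q$, take $u = \pi_P(w)$ and $v = \pi_Q(w)$, so that $w \subseteq u \times v$; the monotone image of an artinian and narrow poset is again artinian and narrow (a sequence in the image lifts to one in the domain, whose weakly increasing subsequence projects to a weakly increasing subsequence in the image), so $u$ and $v$ lie in $\cF(E(P))$ and $\cF(E(Q))$ respectively. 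The main obstacle is purely the combinatorial bookkeeping in these subsequence extractions; once the Ramsey characterization is isolated, no substantively new idea is required.
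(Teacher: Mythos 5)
Your proof is correct and takes essentially the same approach as the paper: both reduce the statement to the on-the-nose equality $\cF(E(P)\ox E(Q))=\cF(E(P\times Q))$ and settle it via the characterization that a poset is artinian and narrow if and only if every infinite sequence admits an infinite weakly increasing subsequence, which the paper attributes to Higman and derives from Lemma~\ref{fund} while you obtain it directly from Ramsey's theorem. The only difference is one of detail: the paper leaves the two inclusions as an exercise, whereas you carry out the double subsequence extraction for $\supseteq$ and the monotone-projection argument for $\subseteq$ explicitly.
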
 

\begin{proof}
It is obvious that the singleton poset $\{*\}$ is sent to $I=(\{*\},\cP(\{*\}))$. Given two posets $(P,\leq)$ and $(Q,\leq)$, we must show that $E(P)\ox E(Q) = E(P \ox Q)$.
Both of these finiteness spaces have $P \times Q$ as underlying set. The finiteness structure of the former is given by
$$\cF(E(P)\ox E(Q))=\{w \seq u \times v \,|\, u \text{ and }v \text{ are artinian, narrow subsets of }P \text{ and }Q \text{ respectively}\}$$
while the finiteness structure on the latter is given by
$$\cF(E(P \ox Q))=\{w \seq P \times Q \,|\, w \text{ is artinian and narrow}\}.$$
The equality between these two finiteness structures can be proved using the fact that a poset $S$ is artinian and narrow if and only if for each sequence $(s_i)_{i \in \NN}$ in $S$, there exists an infinite sequence $n_1<n_2<n_3<\cdots$ such that $s_{n_1}\leq s_{n_2} \leq s_{n_3} \leq \cdots$. This has been stated without proof in~\cite{Higman}, but can be easily proved via Lemma~\ref{fund}.
\end{proof}

We thus have the following diagram made of strict symmetric monoidal functors:
$$\cd{{\sf StrPos} \,\ar@{^(->}[r] \ar[d]_-{E} & {\sf Pos} \\ \finf \,\ar@{^(->}[r] & \finpf \,\ar@{^(->}[r] & \finrel}$$

Denoting $Mon(\cC)$ for the category of monoids and their morphisms in a monoidal category $\cC$, we then get the following theorem.

\begin{theorem} \label{po-imp-fin}
The functor $E$ induces a functor $Mon(E)\colon Mon({\sf StrPos}) \rarr Mon(\finf)$ from the category of strict pomonoids to the category of finiteness monoids.
\end{theorem}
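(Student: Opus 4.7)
The plan is to invoke the general categorical principle that any symmetric monoidal functor $F \colon \mathcal{C} \rarr \mathcal{D}$ (whether lax, strong, or strict) induces a functor $Mon(F) \colon Mon(\mathcal{C}) \rarr Mon(\mathcal{D})$ between the categories of internal monoids, sending $(M,\mu,\eta) \mapsto (F(M), F(\mu), F(\eta))$ and $f \mapsto F(f)$. Since the preceding lemma establishes that $E \colon {\sf StrPos} \rarr \finf$ is strict symmetric monoidal, this principle applies directly to yield the desired functor $Mon(E)$.

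To spell the construction out: given an internal monoid $(M,\mu,\eta)$ in ${\sf StrPos}$, I would set
\[
Mon(E)(M,\mu,\eta) \;=\; (E(M), E(\mu), E(\eta)).
\]
Strictness of $E$ gives the equalities $E(M \ox M) = E(M) \ox E(M)$ and $E(I) = I$, so that $E(\mu)$ and $E(\eta)$ automatically have the correct types in $\finf$. Applying the functor $E$ to the associativity and unit diagrams satisfied by $(\mu,\eta)$ in ${\sf StrPos}$, and using strictness to ensure that the associators and unitors transport on the nose, yields the corresponding commutative diagrams in $\finf$. The same observation shows that the image $E(f)$ of a monoid morphism $f \colon M \rarr N$ in ${\sf StrPos}$ is again a monoid morphism in $\finf$. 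Preservation of identities and composition by $Mon(E)$ is then immediate from functoriality of $E$.

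There is no substantive obstacle left at this stage: all of the real content has been absorbed into the preceding lemma that $E$ is strict symmetric monoidal and into the earlier identification of $Mon({\sf StrPos})$ with the category of strict pomonoids. Given these inputs, the theorem reduces to the routine observation that (strict) monoidal functors transport internal monoid structure, which is why one expects the authors to dispatch it in a line or two.
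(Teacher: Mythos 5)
Your proposal is correct and matches the paper's proof, which simply cites the general fact that (lax-)monoidal functors take monoids to monoids, relying on the preceding lemma that $E$ is strict symmetric monoidal. Your explicit unwinding of the construction is a harmless elaboration of the same one-line argument.
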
 

\begin{proof}
This is an immediate consequence of the general fact that (lax-)monoidal functors take monoids to monoids.
\end{proof}

\subsection{Linearizing finiteness spaces and generalizing the Ribenboim construction}

Let $A$ be an abelian group and $\X=(X,\cU)$ a finiteness space. Ehrhard defined in~\cite{Ehr2} the abelian group $A\langle \X\rangle$ as the set
$$A\langle \X\rangle=\{f\colon X\rarr A \,|\, supp(f)\in \cU\}$$
together with pointwise addition. Evidently in the case of a poset $(P,\leq)$ with its finiteness structure as determined by Theorem~\ref{pomfin}, we recover $G(P,A)$.
With this in mind, Ribenboim's construction can now be generalized further. We use in the following theorem a partial finiteness monoid and not a finiteness monoid for two reasons. Firstly, this is more general, bringing in Example~\ref{example polynomials of degree at most n}. But the main reason is that the category \finpf, as opposed to \finf, is symmetric monoidal closed, complete and cocomplete, which will turn out to be important properties for the study of Morita theory in future work. 

\begin{theorem} 
If $(\M,\mu\colon \M\ox \M\rarr \M,\eta\colon I\rarr \M)$ is a partial finiteness monoid and $R$ a ring (not necessarily commutative, but with unit), then $R\langle \M\rangle$ canonically has the structure of a ring.
\end{theorem}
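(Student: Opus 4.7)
The plan is to mimic Ribenboim's multiplication formula and extract the precise properties of $\mu$ and $\eta$ as morphisms in $\finpf$ that make it work. Define the unit $e\colon M\to R$ by $e(\eta(*))=1_R$ and $e(m)=0$ elsewhere (the internal unit laws force $\eta$ to be defined at $*$ unless $\M$ is the zero object, in which case $R\langle\M\rangle$ is the zero ring), and define the product by
\[ (f\cdot g)(m) \;=\; \sum_{(m_1,m_2)\in X_m(f,g)} f(m_1)\cdot g(m_2), \]
where $X_m(f,g)=\mu^{-1}(\{m\})\cap (supp(f)\times supp(g))$.

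First I would check that $f\cdot g\in R\langle\M\rangle$. Since $\mu$ is a morphism in $\finpf$, condition~$(2')$ of Lemma~\ref{lemma condition (2')} says $\mu^{-1}(\{m\})\in \cF(\M\ox\M)^\perp$, and intersecting with $supp(f)\times supp(g)\in\cF(\M\ox\M)$ makes $X_m(f,g)$ finite, so each sum is well-defined. For the support, $supp(f\cdot g)\seq \mu(supp(f)\times supp(g))\in\cF(\M)$ by condition~(1) on $\mu$, and downward closure of $\cF(\M)=\cF(\M)^{\perp\perp}$ under inclusion (if $u'\seq u\in\cF(\M)$ then $u'\cap v\seq u\cap v$ is finite for every $v\in\cF(\M)^\perp$) gives $supp(f\cdot g)\in\cF(\M)$.

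Next I would verify the ring axioms. Addition is pointwise, and closure uses $supp(f+g)\seq supp(f)\cup supp(g)$ together with closure of $\cF(\M)$ under finite unions. Distributivity reduces to distributivity in $R$ after writing both sides as sums over a common finite index set inside $\mu^{-1}(\{m\})\cap(supp(f)\times(supp(g)\cup supp(h)))$. The unit laws follow from the internal monoid axioms: the equation $\mu\circ(\eta\ox\M)=\lambda$ means that for each $m\in M$ the pair $(\eta(*),m)$ lies in $Dom(\mu)$, has image $m$, and is the unique element of $\mu^{-1}(\{m\})$ with first coordinate $\eta(*)$, so $(e\cdot f)(m)$ collapses to $f(m)$; the right-unit case is symmetric. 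For associativity, expanding $((f\cdot g)\cdot h)(m)$ and interchanging finite sums gives $\sum f(m_1)g(m_2)h(m_3)$ indexed by triples $(m_1,m_2,m_3)\in supp(f)\times supp(g)\times supp(h)$ lying in the domain of the partial composite $\mu\circ(\mu\ox\M)$ and mapped to $m$; the analogous expansion for $(f\cdot(g\cdot h))(m)$ gives the same formula indexed by triples for $\mu\circ(\M\ox\mu)$, and these two index sets coincide because the two partial compositions are equal as morphisms in $\finpf$ by associativity of the internal monoid.

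The main obstacle will be the associativity bookkeeping: one must trust that equality of morphisms in $\finpf$ transfers not only the pointwise values but also the domains of the two partial compositions, so that the two triple sums are genuinely reindexed versions of each other. Once this point is taken seriously, every other verification is a local calculation reducing to an identity in $R$ together with the finiteness and support facts established in the first step.
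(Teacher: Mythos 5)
Your proposal is correct and follows essentially the same route as the paper's proof: the same unit $e$ and Ribenboim-style product, with finiteness of $X_m(f,g)$ extracted from condition~($2^\prime$) of Lemma~\ref{lemma condition (2')} applied to $\mu$, and $supp(f\cdot g)\seq \mu(supp(f)\times supp(g))\in\cF(\M)$ from condition~(1). The paper dismisses the ring axioms as ``straightforward,'' so your careful treatment of the unit laws via $\mu\circ(\eta\ox\M)=\lambda$ and of associativity via equality of the partial composites $\mu\circ(\mu\ox\M)$ and $\mu\circ(\M\ox\mu)$ (including domains, the point you rightly flag) simply makes explicit what the paper leaves to the reader.
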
 

\begin{proof}
Let us denote $\M$ by $(M,\cU)$. First, notice that either $M$ is the empty set or $\eta(*)$ is defined. The multiplication in $R\langle \M\rangle$ is given by
$$(f \cdot g) (m) = \sum_{(m_1,m_2) \in X_m (f,g)} f(m_1) \cdot g(m_2)$$
where
$$X_m (f,g) := \lbrace (m_1,m_2) \in M \times M \,|\, \mu(m_1,m_2) = m \text{ and } f(m_1) \neq 0, g(m_2) \neq 0 \rbrace.$$
The fact that $X_m (f,g)$ is finite simply comes from the fact that the multiplication $$\mu\colon \M\ox \M\rarr \M$$ satisfies condition ($2^\prime$) of Lemma~\ref{lemma condition (2')}. Moreover, $f \cdot g \in R\langle \M\rangle$ since
$$supp(f\cdot g) \seq \mu(supp(f) \times supp(g)) \in \cU$$ using the fact that $\mu$ satisfies condition (1) of Definition~\ref{findef}.
The unit of $R\langle \M\rangle$ is given by the function $e\colon M\rarr R$ where $e(m)=1_R$ if $m=\eta(*)$ and 0 otherwise. The calculation of the ring axioms is straightforward.
\end{proof}

In the case where $\M$ is $Mon(E)(M)$ for a strict pomonoid $M$, we recover the ring $G(M,R)$. Thus we can view the ring associated to an arbitrary partial finiteness monoid as a generalized Ribenboim power series ring.

\section{Examples}

This new approach to generalizing the Ribenboim construction gives many additional interesting examples.

\begin{example} [Puiseux series]{\em 
 A {\em Puiseux series}~\cite{newton,puiseux} with coefficients in the ring $R$ is a series (with indeterminate $T$) of the form
$$\sum_{i \geqslant a}^{+ \infty} r_i T^{i/n}$$
for some integer $a \in \ZZ$, some positive integer $n \in \NN\setminus\{0\}$ and where $r_i \in R$. With the usual sum and product law, they form the ring of Puiseux series with coefficients in $R$. We can see this ring as an example of the above construction as follows. For $a \in \ZZ$ and $n \in \NN\setminus\{0\}$, we consider the following subset of rational numbers
$$u_{a,n}=\left\{\frac{i}{n} \,|\, i \in \ZZ, i \geqslant a\right\} \subset \QQ.$$
Then we define $\cU$ as the down-closure of $\left\{u_{a,n} \,|\, a\in \ZZ, n \in \NN\setminus\{0\} \right\}$ in $\cP(\QQ)$, i.e.,
$$\cU=\, \downarrow \left\{u_{a,n} \,|\, a\in \ZZ, n \in \NN\setminus\{0\} \right\} \subset \cP(\QQ).$$
Let us prove $(\QQ,\cU)$ forms a finiteness space. Let $u \in \cU^{\perp\perp}$. Suppose there exists an infinite sequence of rational numbers (written in irreducible form) in $u$
$$\frac{a_1}{b_1},\frac{a_2}{b_2},\frac{a_3}{b_3},\dots$$
such that $0<b_1<b_2<b_3<\cdots$. Then, using Proposition~1 in~\cite{Ehr2}, there exists an infinite subsequence of this sequence whose elements belong to some common $u_{a,n}$. But this is clearly impossible. So there exists $n \in \NN\setminus\{0\}$ such that any element in $u$ can be written as $\frac{i}{n}$ for some $i \in \ZZ$. Now, suppose there exists an infinite sequence of elements in $u$
$$\frac{i_1}{n},\frac{i_2}{n},\frac{i_3}{n},\dots$$
such that $i_1>i_2>i_3>\cdots$. Then, using again Proposition~1 in~\cite{Ehr2}, we can deduce the existence of an infinite subsequence of this sequence whose elements belong to some common $u_{a,m}$. Since this is impossible, such a sequence does not exist and we know there exists an $a \in \ZZ$ such that $u \subseteq u_{a,n}$, proving that $\cU^{\perp\perp}\subseteq \cU$ and so $(\QQ,\cU)$ is a finiteness space.

Next, we want to show that $(\QQ,\cU)$ equipped with the classical $+$ and $0$ is a monoid in \finpf\ (actually, even in \finf). The only non-trivial fact is that
$$+\colon (\QQ,\cU)\ox (\QQ,\cU) \to (\QQ,\cU)$$ satisfies conditions~(1) and ($2^\prime$). Let $a,b \in \ZZ$ and $n,m \in \NN\setminus\{0\}$. It is easy to see that
$$+(u_{a,n}\times u_{b,m}) \subseteq u_{am+bn,nm}$$
proving condition~(1). For condition~($2^\prime$), let $\frac{c}{p}$ be a rational number with $c \in \ZZ$ and $p \in \NN\setminus\{0\}$. We need to show that
$$+^{-1}\left(\frac{c}{p}\right)\cap (u_{a,n} \times u_{b,m})=\left\{\left(\frac{i}{n},\frac{j}{m}\right) \,|\, i \geqslant a, j \geqslant b \text{ and }imp+jnp=nmc\right\}$$
is a finite set. For each $\left(\frac{i}{n},\frac{j}{m}\right)$ in the above set, we have $imp=nmc-jnp \leqslant nmc-bnp$ and so
$$a \leqslant i \leqslant \frac{nmc-bnp}{mp}.$$
So, $i$ can only take a finite number of values. But for each such $i$, there is at most one corresponding $j$, proving there are only finitely many such $\left(\frac{i}{n},\frac{j}{m}\right)$.

Since $((\QQ,\cU),+,0)$ is a monoid in \finpf, we can consider the ring $R\langle(\QQ,\cU)\rangle$, which is nothing but the ring of Puiseux series with coefficients in $R$.}
\end{example} 

\begin{example} [Formal power series] {\em 
Let $A$ be a set (called in this case the \textit{alphabet}). Then, let $M$ be the free monoid generated by $A$. The finiteness space $(M,\cP(M))$ has a monoid structure in \finpf\ (and actually even in \finf) given by the classical monoid structure of $M$. The only non-trivial part here, is to check that the multiplication
$$\cdot\,\colon (M,\cP(M)) \ox (M,\cP(M)) \to (M,\cP(M))$$ satisfies condition~($2^\prime$).
This is due to the fact that, since $M$ is freely generated by $A$, for each $m \in M$, there are only finitely many $(m_1,m_2) \in M^2$ such that $m_1\cdot m_2=m$.
Then the ring $R\langle(M,\cP(M))\rangle$ is called the ring of formal power series with exponents in $M$ and coefficients in $R$ and is constructed as the set of all maps $M \to R$, together with the classical sum and product of formal power series.
}\end{example} 

\begin{example} [Polynomials of degree at most $n$]\label{example polynomials of degree at most n} {\em 
Let $n$ be a natural number and $X=\{0,\dots,n\}$. The finiteness space $(X,\cP(X))$ has a monoid structure $((X,\cP(X)),\mu,\eta)$ in \finpf:
$$\eta \colon (\{*\},\cP(\{*\})) \to (X,\cP(X))$$
maps $*$ to $0$ and
$$\mu \colon (X,\cP(X))\ox (X,\cP(X))=(X \times X,\cP(X \times X)) \to (X,\cP(X))$$
is defined by
\begin{align*}
\mu(a,b)=\begin{cases}a+b \quad &\text{if }a+b \leqslant n\\ \text{undefined} \quad &\text{if }a+b>n.\end{cases}
\end{align*}
The corresponding ring $R\langle(X,\cP(X))\rangle$ is then nothing else than $R_{\leqslant n}[T]$, the ring of polynomials of degree at most $n$ and coefficients in $R$. The multiplication is generated by
\begin{align*}
(r_1T^a)\cdot (r_2T^b)=\begin{cases}r_1r_2T^{a+b} \quad &\text{if }a+b \leqslant n\\ 0 \quad &\text{if }a+b>n.\end{cases}
\end{align*}
}\end{example} 

\section{Future work}

Differentiation provides important operators on power series rings and a natural question is whether one can differentiate the generalized 
power series that arise in this paper. Indeed, in the commutative case, the category of linearized finiteness spaces provided one of the first examples of {\it 
differential categories}~\cite{diffcat}, used in the study of models of differential linear logic~\cite{ER1}. It will be of interest to study differentiation of these 
generalized series and the extent to which they fit into the differential category framework. 

Laurent series are of great interest for any number of reasons, but one place they arise is in renormalization in quantum field theory~\cite{Marc}. 
This ring has a {\it Rota-Baxter operator}~\cite{Guo} which is used in the Connes-Kreimer approach to renormalization~\cite{ebrahimifard}. 
Guo and Liu~\cite{GL} subsequently studied when a projection operator on Ribenboim power series is in fact a Rota-Baxter operator. A similar characterization of 
this operator and its functorial properties in the context of finiteness monoids is an ongoing project.

Finally we mention {\it Morita theory}~\cite{AF}. Two rings are {\it Morita equivalent} if their categories of representations are equivalent. This theory
generalizes to any number of settings. For example, the Morita theory of pomonoids~\cite{Laan2,Laan} is a well-established field. It is of great
interest to determine the extent to which the functorial constructions presented here relate Morita theory for pomonoids and partial finiteness monoids to Morita theory
for rings. 

\bibliographystyle{plain}

\end{document}